\documentclass[12pt]{article}

\usepackage[english]{babel}
\usepackage[utf8]{inputenc}

\usepackage[margin=1.13 in, top=1.1 in, bottom= 1.2 in]{geometry}
\makeatletter
\g@addto@macro\normalsize{%
  \setlength\abovedisplayskip{7pt}
  \setlength\belowdisplayskip{7pt}
  \setlength\abovedisplayshortskip{7pt}
  \setlength\belowdisplayshortskip{7pt}
}
\makeatother

\usepackage{tocloft}

\usepackage{amsfonts}
\usepackage{mathrsfs}
\usepackage{bbm}
\usepackage{latexsym}
\usepackage{math dots}
\usepackage{amssymb}
\usepackage{mathtools}
\usepackage{relsize}
\usepackage{lipsum}

\usepackage{todonotes}
\usepackage{enumitem}
\setlist{nolistsep} 	
\usepackage{amsthm}

\usepackage{xcolor}
\definecolor{Color1}{rgb}{0.0, 0.42, 0.47}
\definecolor{Color2}{rgb}{0.78, 0.11, 0.0}

\usepackage{titlesec}
\titleformat{\section}
  {\large\center\bfseries}
  {\thesection.}{.7em}{}
\titlespacing*{\section}{0pt}{3.5ex plus 0ex minus 0ex}{1.5ex plus 0ex}
\titleformat{\subsection}
  {\center\bfseries}
  {\thesubsection.}{.7em}{}
\titlespacing*{\subsection}{0pt}{3.5ex plus 0ex minus 0ex}{1.5ex plus 0ex}
\titleformat{\subsubsection}
  {\center\bfseries}
  {\thesubsubsection.}{.7em}{}
\titlespacing*{\subsubsection}{0pt}{3.5ex plus 0ex minus 0ex}{1.5ex plus 0ex}

\addto\captionsenglish{}

\usepackage{titling}
\makeatletter
\renewenvironment{abstract}{
\begin{center}
{\bfseries \large\abstractname\vspace{\z@}}
\end{center}
\quotation
}
\makeatother

\usepackage[linktocpage=true]{hyperref}
\usepackage[capitalize]{cleveref}
\hypersetup{citecolor = Color1,colorlinks,
			linkcolor = black,
			urlcolor = Color2}

\newtheoremstyle{plain}{3mm}{3mm}{\slshape}{}{\bfseries}{.}{.5em}{}
\newtheoremstyle{definition}{2mm}{2mm}{}{}{\bfseries}{.}{.5em}{}
\theoremstyle{plain} 
	
\newtheorem{theorem}{Theorem}[section]

\newtheorem{lemma}[theorem]{Lemma}
\newtheorem{corollary}[theorem]{Corollary}
\theoremstyle{definition} 
\newtheorem{definition}[theorem]{Definition}
\newtheorem{remark}[theorem]{Remark}
\newtheorem{example}[theorem]{Example}

\theoremstyle{plain} 
\newcounter{MainTheoremCounter}

\newtheorem{maintheorem}[MainTheoremCounter]{Theorem}

\theoremstyle{plain} 

\usepackage{chngcntr}

\numberwithin{equation}{section}

\allowdisplaybreaks

\newcommand{\Cesaro}{Ces\`{a}ro }

\newcommand{\N}{\mathbb{N}}
\newcommand{\Z}{\mathbb{Z}}
\newcommand{\R}{\mathbb{R}}
\newcommand{\C}{\mathbb{C}}

\renewcommand{\epsilon}{\varepsilon}
\renewcommand{\leq}{\leqslant}
\renewcommand{\geq}{\geqslant}
\renewcommand{\setminus}{\backslash}

\renewcommand{\subset}{\subseteq}

\usepackage[normalem]{ulem}

\usepackage[makeroom]{cancel}

\newcommand{\E}{\operatorname{\mathbb{E}}}

\newcommand{\oo}{\infty}

\newcommand{\floor}[1]{\lfloor #1 \rfloor}

\newcommand{\unif}{\operatorname{unif}}

\author{By~~{\scshape Michael Reilly}}
\date{\small \today}
\title{\bfseries Uniform Weighted Averages and a Conjecture of Bergelson, Moreira, and Richter}

\begin{document}
\maketitle



\begin{abstract}
We confirm a conjecture posed by Bergelson, Moreira, and Richter (\cite[Remark 1.12]{Bergelson_Moreira_Richter_2020}), and in particular show that for every probability measure preserving system $(X,\mathscr{B},\mu,T)$, every $k\in \mathbb{N}$, every set $A\in \mathscr{B}$ with $\mu(A)>0$, and every tempered function $f$,
\[
    \lim_{N\to\infty}\frac{1}{N}\sum_{n=1}^N\mu(A\cap T^{-\lfloor{f(n)\rfloor}}A\cap T^{-\lfloor{f(n+1)\rfloor}}A\cap \cdots \cap T^{-\lfloor{f(n+k)\rfloor}}A)>0.
\]

This is achieved by establishing conditions on an increasing function $W:\N\rightarrow (0,\oo)$ such that if $(x_n)_{n\in \N}$ is a bounded sequence in a Banach space with
   \[
   \lim_{W(N)-W(M)\to\oo}\frac{1}{W(N)-W(M)}\sum_{n=M}^N (W(n)-W(n-1))x_n =L
   \]
   then the limit of Ces\`aro averages of $(x_n)_{n\in \N}$, $\lim_{N\to\oo}\frac{1}{N}\sum_{n=1}^Nx_n$ is also equal to $L$.
   
Furthermore, the methods we develop can be used to sharpen some of the combinatorial results obtained in \cite{Bergelson_Moreira_Richter_2020}. For example, if $E$ is a set of positive upper density, then for any $k\in \mathbb{N}$, any $\epsilon>0$,  and all sufficiently large $N\in \N$ there is an $n\in [N-N^{1/2+\epsilon},N]$ such that
    \[\{a,a+\floor{n^{3/2}},a+\floor{(n+1)^{3/2}},\dots ,a +\floor{(n+k)^{3/2}}\}\subseteq E.
    \]
\end{abstract}

\section{Introduction}

Our goal is to resolve a conjecture posed in \cite[Remark 1.12]{Bergelson_Moreira_Richter_2020} by showing that multiple ergodic averages of the form 
\begin{equation}\label{eq:mea}
    \frac{1}{N}\sum_{n=1}^N\mu(A\cap T^{-\floor{f(n)}}A\cap T^{-\lfloor{f(n+1)\rfloor}}A\cap \cdots \cap T^{-\floor{f(n+k)}}A),
\end{equation}
converge to a positive limit as $N\to\oo$ whenever $f$ is a tempered function\footnote{A function $f:[a,\oo)\rightarrow \R$ is called a \emph{tempered function} if there exists $\ell\in \N$ such that $f^{(\ell)}$ decreases to $0$ and $\lim_{x\to\oo}xf^{(\ell)}(x)=\oo$}. We will prove this by developing some new tools regarding weighted averages and applying a theorem from \cite{Bergelson_Moreira_Richter_2020}.

In order to discuss this in more detail, we first require some definitions. Let $\Delta$ denote the \emph{discrete derivative}\footnote{Other sources may define the discrete derivative as $\Delta f(n) = f(n+1)-f(n)$.}, which acts on a function $f$ defined on $\N=\{1,2,\dots\}$ by $\Delta f(n)= f(n)-f(n-1)$ for $n\geq 2$ and $\Delta f(1) = f(1)$. For $\ell\geq 1$, we define $\Delta^{\ell}f(n) = \Delta (\Delta^{\ell-1}f(n))$, where $\Delta ^0f(n) =  f(n)$. For $p(x) = a_mx^m+\dots+a_1x+a_0\in \Z[x]$, we put 
\[
p(\Delta)f(n) = a_m\Delta^mf(n)+\cdots+a_1\Delta f(n)+a_0f(n).
\]

\begin{definition}
Let $\mathscr{F}_1$ be the collection of functions defined by
    \[
    \mathscr{F}_1 = \{f:\N\rightarrow \R: \lim_{n\to\oo}f(n) = \oo, \lim_{n\to\oo}\Delta f(n) = 0,\text{ and } \Delta f\text{ is eventually decreasing}\}.
    \]
    Having defined $\mathscr{F}_{\ell}$ for some $\ell\in \N$, let $\mathscr{F}_{\ell+1} = \{f:\N\rightarrow \R:\Delta f \in \mathscr{F}_{\ell}\}$. Finally, let $\mathscr{F} = \bigcup_{\ell=1}^{\oo}\mathscr{F}_{\ell}$. We may note that $\mathscr{F}$ contains all tempered functions.
\end{definition}
\begin{example} A function $f:\N\rightarrow \R$ is contained in $\mathscr{F}$ if and only if there is an $\ell\in \N$ such that $\Delta^{\ell} f$ is eventually decreasing to $0$ but $\Delta^{\ell-1}f$ tends to $\oo$. So any function of the form $f(n) = n^c$ for $c\in (0,\oo)\setminus \Z$ is contained in $\mathscr{F}$, but no polynomial is contained in $\mathscr{F}$. Let $p(x) = 3x^3-5x^2+x-1\in \Z[x]$ and let $f(x) = x^{\frac{3}{2}}\in \mathscr{F}$. Then 
$
p(\Delta)f(n) = 3(n-3)^{\frac{3}{2}}-5(n-2)^{\frac{3}{2}}+(n-1)^{\frac{3}{2}}-n^{\frac{3}{2}}.
$
\end{example}

The following is Theorem D in \cite{Bergelson_Moreira_Richter_2020}.
\begin{theorem}[{\cite[Theorem D]{Bergelson_Moreira_Richter_2020}}]\label{thm:D}
    Let $k \in \N, \ell\in \N$, $f \in \mathscr{F}_{\ell}$. Let $W= \Delta^{\ell-1}f$ so that $\lim_{n\to\oo}W(n)=\oo$ and $\lim_{n\to\oo}\Delta W(n)=0$.  Let $(X,\mathscr{B},\mu,T)$ be an invertible probability measure preserving system and let $p_1,\dots,p_k \in \Z[x]$. 
    \begin{enumerate}[label = (\alph*)]
        \item For any $h_1,\dots,h_k \in L^{\infty}(X,\mathscr{B},\mu)$, the limit
    \begin{equation*}
       \lim_{W(N)-W(M)\to\oo}\frac{1}{W(N)-W(M)}\sum_{n=M}^N\Delta W(n) T^{\floor{p_1(\Delta)f(n)}}h_1\cdots T^{\floor{p_k(\Delta)f(n)}}h_k 
    \end{equation*}
     exists in $L^2(X,\mathscr{B},\mu)$.
    \item For any $A \in\mathscr{B}$ with $\mu(A) > 0$, the limit
    \begin{equation*}
          \lim_{W(N)-W(M)\to\oo}\frac{1}{W(N)-W(M)}\sum_{n=M}^N\Delta W(n)\mu(A\cap T^{-\floor{p_1(\Delta)f(n)}}A\cap\cdots\cap  T^{-\floor{p_k(\Delta)f(n)}}A)
    \end{equation*}
    exists and is positive.
    \end{enumerate}
\end{theorem}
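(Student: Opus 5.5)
Theorem~\ref{thm:D} is quoted from \cite{Bergelson_Moreira_Richter_2020} and is used in this paper as a black box. If I had to reprove it, the plan would be a change of variables that turns the $W$-weighted averages into uniform averages of sequences that are locally polynomial. After that step, part (a) would follow from known convergence results for polynomial multiple ergodic averages, and part (b) from uniform polynomial multiple recurrence.

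\textbf{Step 1: reparametrize by $W$.} Since $\Delta W(n)\to 0$ and $W(n)\to\infty$, each $t\geq W(1)$ has a unique $n(t)$ with $W(n(t)-1)<t\leq W(n(t))$. The weighted sum $\sum_{n=M}^N \Delta W(n)\,x_n$ is then, up to $O(1)$ boundary terms, the integral of $x_{n(t)}$ over $t\in[W(M),W(N)]$. So the averages in (a) and (b) are uniform averages over long intervals in the ``$W$-time'' $t$.

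\textbf{Step 2: local polynomial structure.} In $W$-time, $W=\Delta^{\ell-1}f$ plays the role of the linear variable, and $\Delta^{\ell-2}f,\dots,f$ are its successive discrete antiderivatives. Fix $H$ and a window of $t$ of length $H$, that is, a block of consecutive $n$ on which $W$ increases by about $H$. On such a block, Taylor expansion with the monotonicity hypotheses ($\Delta W$ eventually decreasing to $0$) should give the following. Each $p_j(\Delta)f(n)$ agrees, up to an error that is $o(1)$ uniformly in the window, with a real polynomial in the integer variable $m$. Here $m$ indexes the steps at which $W$ crosses successive integers, and the polynomial's degree is at most $\ell-1+\deg p_j$. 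The floors are handled by discarding a proportion of the window that tends to $0$, namely the points where the fractional part lies near an integer. For the positivity part it is convenient to pass to the subsequence where $W(n)$ is near an integer. Making this precise, and controlling the error from the rounding $\floor{\cdot}$ uniformly in the window, is where I expect the main obstacle.

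\textbf{Step 3: convergence and positivity.} For (a), I would invoke the Host--Kra characteristic factor theory for polynomial averages of the form
\[
\frac{1}{H}\sum_{m}T^{\floor{q_1(m)}}h_1\cdots T^{\floor{q_k(m)}}h_k,
\]
with generalized (floored real) polynomials $q_j$. This reduces matters to nilsystems, where equidistribution of polynomial orbits in the nilmanifold gives a limit. The key point is that this limit is uniform in the window and depends on the coefficients continuously (or in a controlled way), and that the coefficients vary slowly with $t$. A van der Corput / PET induction along $W$-time would give a uniformity-seminorm bound showing that the full double limit $W(N)-W(M)\to\infty$ exists.

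For (b), I would use the uniform polynomial Szemer\'edi theorem of Bergelson--Leibman--McCutcheon. It gives a $c=c(\mu(A))>0$ such that
\[
\frac{1}{H}\sum_{m}\mu\bigl(A\cap T^{-\floor{q_1(m)}}A\cap\cdots\cap T^{-\floor{q_k(m)}}A\bigr)\geq c
\]
for all large $H$, uniformly over the family of polynomials with bounded degree and vanishing constant term. Averaging these window bounds over $t$ yields a positive lower bound for the limit in (b), whose existence follows from (a).
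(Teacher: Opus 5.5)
The paper does not prove this statement. It is quoted from \cite{Bergelson_Moreira_Richter_2020} and used only as a black box, combined with Theorem~\ref{thm:A} to obtain Theorem~\ref{thm:BMR_conjecture}, so your sketch has to stand on its own. Step~1 is fine. Step~2, on which everything else rests, is false for $\ell\geq 2$: there is no local polynomial structure in $W$-time.

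For $\ell\geq 2$ we have $\Delta f\in\mathscr{F}_{\ell-1}$, so $\Delta f(n)\to\oo$. Each single step $n\mapsto n+1$ therefore moves $f$ by an unbounded amount. One unit of $W$-time, the block $\{n:\floor{W(n)}=m\}$, contains $\gtrsim 1/\Delta W\to\oo$ such steps while $m$ stays fixed. Hence $f$ itself (the case $p_j=1$) is not within $o(1)$, or even $O(1)$, of any function of $m$. For $f(n)=n^{3/2}$ the block has length $\asymp m$ and $f$ grows by $\asymp m^2$ across it. Even at the crossing points $n_m$ alone, the integer rounding of $n_m$ moves $f$ by $\asymp m$. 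The degree bound $\ell-1+\deg p_j$ is also wrong: $\Delta^i f\to 0$ for $i\geq \ell$, so no local model of $p_j(\Delta)f$ has degree above $\ell-1$.

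The correct Taylor model lives in the original variable. Up to $o(1)$, $p_j(\Delta)f(n+h)$ is a real polynomial in $h$ of degree at most $\ell-1$. This holds only for $0\leq h\leq H$ with $H^{\ell}\Delta W(n)\to 0$, that is, on windows of vanishing $W$-length.

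This undermines Step~3 as well. On these genuine windows only the coefficients built from $W(n)$ vary slowly in $W$-time. The others, down to the constant term $p_j(\Delta)f(n)$, change enormously from one window to the next. Window limits also depend on the coefficients neither continuously nor uniformly. For $T$ the swap of two points and $g=\pm 1$ on them, $\frac1H\sum_{h<H}T^{\floor{\alpha h}}g$ tends to $g$ at $\alpha=2$ but to $0$ at irrational $\alpha$ near $2$. So you cannot integrate window limits over $t$. What is actually needed is control, under $W$-weights, of the joint distribution of the whole vector $(\Delta^{\ell-1}f(n),\dots,\Delta f(n),f(n))$, for instance equidistribution of the corresponding orbits in nilmanifolds after a characteristic-factor reduction. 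That is the real content of the theorem, and the sketch does not supply it.

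Part~(b) fails for a related reason. The uniform polynomial Szemer\'edi theorem concerns integer polynomials with zero constant term. Your window polynomials have real coefficients and large, uncontrolled constant terms, so there is no uniform per-window lower bound. Take $\ell=2$, $k=1$, $p_1=1$, a rotation $x\mapsto x+\alpha$, and $A=[0,\delta)$ with $\delta$ small, and write $\|\cdot\|$ for the distance to $\Z$. Suppose a window has
\begin{itemize}
\item $\{f(n)\}\leq 1/2$,
\item $H\{W(n)\}$ and $H\|\floor{W(n)}\alpha\|$ small,
\item $\floor{f(n)}\alpha$ close to $1/2$ modulo $1$.
\end{itemize}
Then $\floor{f(n+h)}=\floor{f(n)}+h\floor{W(n)}$, and $\mu(A\cap T^{-\floor{f(n+h)}}A)=0$ for every $h<H$. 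Positivity has to come from how much $W$-mass the favourable windows carry, not from averaging a bound that holds on every window.
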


In \cite[Remark 1.12]{Bergelson_Moreira_Richter_2020}, it is conjectured that each limit of weighted averages appearing in Theorem \ref{thm:D} can be replaced with the corresponding limit of \Cesaro averages whenever $f$ is a tempered function. Theorem \ref{thm:D} applies when $f$ is any element of $\mathscr{F}$, not just when $f$ is a tempered function. However, when $f$ is not tempered the weighted averages in Theorem \ref{thm:D} cannot be replaced by \Cesaro averages (c.f. Example \ref{ex:log_not_ud} where $f(n)=\log(n)$ is not tempered).

The following theorem is a corollary of our main result and it gives an affirmative answer to the conjecture in \cite{Bergelson_Moreira_Richter_2020}.
\begin{maintheorem}\label{thm:A}
    Let $f\in \mathscr{F}_{\ell}$ be a tempered function. Let $W=\Delta^{\ell-1}f$ so that $W$ increases to $\oo$, $\Delta W$ decreases to $0$, and $\lim_{N\to\oo}N\cdot \Delta W(N) = \oo$. Suppose that $(x_n)_{n\in \N}$ is a bounded sequence in a Banach space $Y$ and let $L\in Y$. If 
    \[
      \lim_{W(N)-W(M)\to\oo}\frac{1}{W(N)-W(M)}\sum_{n=M}^N \Delta W(n)x_n = L
    \]
    then 
    \[
    \lim_{N\to\oo}\frac{1}{N}\sum_{n=1}^N x_n=L.
    \]
\end{maintheorem}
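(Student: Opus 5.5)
The plan is to compare the unweighted sum $\sum_{n\le N}x_n$ with the weighted partial sums by Abel summation, writing each term as $x_n=\frac{1}{w_n}\cdot w_nx_n$ with $w_n:=\Delta W(n)$. Since $w_n$ decreases, $1/w_n$ increases, so all Abel coefficients $\frac{1}{w_{n+1}}-\frac{1}{w_n}$ are nonnegative. This sign is what lets the error terms be controlled.

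Set $S_n=\sum_{m=1}^n w_mx_m$ and $A_n=\sum_{m=1}^n w_m=W(n)$. Abel summation gives
\[
\sum_{n=1}^N x_n=\frac{S_N}{w_N}-\sum_{n=1}^{N-1}\Big(\frac{1}{w_{n+1}}-\frac{1}{w_n}\Big)S_n .
\]
The same identity with $x_n\equiv L$ has left side exactly $NL$. Put $e_n=S_n-W(n)L$. Then
\[
\sum_{n=1}^N x_n-NL=\frac{e_N}{w_N}-\sum_{n<N}\Big(\frac{1}{w_{n+1}}-\frac{1}{w_n}\Big)e_n .
\]

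Next I use the hypothesis. Given $\delta>0$, the uniform limit gives $H_\delta$ such that $\|e_n\|\le\delta W(n)$ whenever $W(n)\ge H_\delta$. For the finitely many $n$ with $W(n)<H_\delta$, boundedness of $(x_n)$ gives $\|e_n\|\le C_\delta$. Hence $\|e_n\|\le \delta W(n)+C_\delta$ for all $n$. Substituting, the $C_\delta$ contributions telescope to at most $2C_\delta/w_N$. This is $o(N)$ precisely because $N\,\Delta W(N)\to\infty$, which is where the tempered hypothesis enters.

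For the $\delta W(n)$ contributions, apply the $x\equiv 1$ identity to get
\[
\sum_{n<N}\Big(\frac{1}{w_{n+1}}-\frac{1}{w_n}\Big)W(n)=\frac{W(N)}{w_N}-N .
\]
So these contributions total at most $2\delta\,W(N)/w_N$. The argument therefore closes, giving $\limsup_N \frac1N\|\sum_{n\le N} x_n-NL\|\le 2\delta K$ for every $\delta>0$, as long as
\[
K:=\limsup_{N\to\infty}\frac{W(N)}{N\,\Delta W(N)}<\infty .
\]
If this fails globally, I would instead run the Abel summation on $[\epsilon N,N]$, subtracting the initial segment (which contributes $O(\epsilon N)$). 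The requirement then becomes
\[
\frac{W(N)-W(\epsilon N)}{N\,\Delta W(N)}=O_\epsilon(1),
\]
which by monotonicity follows from $\Delta W(\epsilon N)/\Delta W(N)=O_\epsilon(1)$, a doubling-type condition on $\Delta W$.

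I expect the main obstacle to be verifying this regularity condition for $W=\Delta^{\ell-1}f$ with $f$ tempered. Concavity of $W$ only gives $W(N)\gtrsim N\Delta W(N)$, which is the wrong direction. I would first try to extract it from the tempered conditions ($f^{(\ell)}\downarrow 0$ and $xf^{(\ell)}(x)\to\infty$) via the mean value theorem, comparing $\Delta^{\ell}f(n)$ with $f^{(\ell)}$ at nearby points. If a direct bound on $f^{(\ell)}(\epsilon x)/f^{(\ell)}(x)$ is not available, the fallback is to pass to a subsequence of scales and use blocks $[N_j,N_{j+1}]$ on which $\Delta W$ varies by a bounded factor. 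The condition $N\Delta W(N)\to\infty$ should guarantee that such blocks can be chosen with $W(N_{j+1})-W(N_j)\to\infty$, so the uniform hypothesis applies to each block.
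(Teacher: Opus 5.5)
\textbf{There is a genuine gap.} Your Abel summation identity is correct, and when $K<\oo$ (e.g.\ $W(N)=N^c$ with $0<c<1$) your argument is complete. However, that estimate bounds $e_N/w_N$ and each $e_n$ separately, so it only uses the hypothesis on initial segments $[1,n]$. That information is genuinely too weak for tempered $f$ in general. For $f(x)=(\log x)^2$ (so $\ell=1$, $W=f$) one has $K=\oo$. Moreover, $x_n=e^{2\pi i\log n}$ satisfies $\E^W_{n\le N}x_n\to 0$, although its Ces\`aro averages do not converge (Example~\ref{ex:log_not_ud}).

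So everything rests on your localized version, which needs $\bigl(W(N)-W(\epsilon N)\bigr)/\bigl(N\,\Delta W(N)\bigr)=O_\epsilon(1)$, and this is exactly what you leave unproved. It holds for regular examples such as $x^c$ or $(\log x)^2$, but it does not follow from the hypotheses: the paper's notion of tempered only asks that $f^{(\ell)}$ decrease to $0$ with $xf^{(\ell)}(x)\to\oo$. Take $\ell=1$ and let $f'$ be a smoothed, nonincreasing step function that is essentially constant on $(x_{k-1},x_k)$ and drops by a factor $k$ just before $x_k$, with $x_k$ growing fast enough that $xf'(x)\to\oo$. Then $f$ is tempered and lies in $\mathscr{F}_1$, but $\bigl(W(x_k)-W(x_k/2)\bigr)/\bigl(x_k\,\Delta W(x_k)\bigr)\gtrsim k$. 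Your block fallback is only a sketch. Where $\Delta W$ drops steeply, a block on which $\Delta W$ varies by a bounded factor can have bounded $W$-increment, so the uniform hypothesis need not apply to it. As written, the proposal proves Theorem~\ref{thm:A} only under an extra regularity assumption on $\Delta W$.

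\textbf{The missing idea} is to regroup your identity so that the uniform hypothesis is used on tail intervals $[n+1,N]$ rather than on initial segments. Since $\frac{e_N}{w_N}=\frac{e_N}{w_1}+\sum_{n<N}\bigl(\frac{1}{w_{n+1}}-\frac{1}{w_n}\bigr)e_N$, your identity becomes
\[
\sum_{n=1}^N(x_n-L)=\frac{e_N}{w_1}+\sum_{n=1}^{N-1}\Bigl(\frac{1}{w_{n+1}}-\frac{1}{w_n}\Bigr)(e_N-e_n),\qquad e_N-e_n=\sum_{m=n+1}^N w_m(x_m-L).
\]
The uniform hypothesis, together with boundedness of $(x_n)$ on intervals of bounded $W$-length, gives $\|e_N-e_n\|\le\delta\bigl(W(N)-W(n)\bigr)+C_\delta$, with $C_\delta$ independent of both $n$ and $N$. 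The case $x_n\equiv 1$ gives $\sum_{n<N}\bigl(\frac{1}{w_{n+1}}-\frac{1}{w_n}\bigr)\bigl(W(N)-W(n)\bigr)=N-\frac{W(N)}{w_1}$. Because the coefficients are nonnegative, this yields $\bigl\|\sum_{n\le N}(x_n-L)\bigr\|\le\delta N+C_\delta/w_N$. Then $N\,\Delta W(N)\to\oo$ finishes the proof, with no regularity of $\Delta W$ beyond eventual monotonicity: start the summation where $\Delta W$ becomes nonincreasing, and the earlier terms contribute $O(1)$.

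With this repair your route is genuinely different from the paper's, and much shorter. The paper obtains Theorem~\ref{thm:A} only as a corollary of the full chain $(5)\Rightarrow(2)\Rightarrow(3)\Rightarrow(4)$ of Theorem~\ref{thm:general_main} (Schatte-type decomposition, Boshernitzan's lemma, Silverman--Toeplitz), followed by the choice $s(N)=N-1$. The same regrouped estimate on $[N-s(N),N]$ would even give $(5)\Rightarrow(4)$ directly.
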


Thus, in light of Theorem \ref{thm:A}, the following theorem follows immediately from Theorem \ref{thm:D}.

\begin{theorem}\label{thm:BMR_conjecture}
    Let $(X,\mathscr{B},\mu,T)$ be an invertible probability measure preserving system. Let $f\in \mathscr{F}$ be a tempered function, let $k\in \N$, and let $p_1,\dots,p_k \in \Z[x]$. 
\begin{enumerate}[label = (\alph*)]
        \item For any $h_1,\dots,h_k \in L^{\infty}(X,\mathscr{B},\mu)$, the limit
    \begin{equation}
       \lim_{N\to\oo}\frac{1}{N}\sum_{n=1}^NT^{\floor{p_1(\Delta)f(n)}}h_1\cdots T^{\floor{p_k(\Delta)f(n)}}h_k 
    \end{equation}
     exists in $L^2(X,\mathscr{B},\mu)$.
    \item For any $A \in\mathscr{B}$ with $\mu(A) > 0$, the limit
    \begin{equation}
          \lim_{N\to\oo}\frac{1}{N}\sum_{n=1}^N\mu(A\cap T^{-\floor{p_1(\Delta)f(n)}}A\cap\cdots \cap T^{-\floor{p_k(\Delta)f(n)}}A)
    \end{equation}
    exists and is positive.
    \end{enumerate}
\end{theorem}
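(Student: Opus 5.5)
The plan is to derive Theorem \ref{thm:BMR_conjecture} directly from Theorem \ref{thm:D} and Theorem \ref{thm:A}. Fix a tempered $f\in\mathscr{F}$ and choose $\ell$ with $f\in\mathscr{F}_\ell$. Set $W=\Delta^{\ell-1}f$. Three facts about $W$ need to be checked:
\begin{itemize}
\item $W$ increases to $\infty$;
\item $\Delta W$ decreases to $0$;
\item $N\Delta W(N)\to\infty$.
\end{itemize}
This is what is needed to apply Theorem \ref{thm:A}. The monotonicity and limit properties are built into the definition of $\mathscr{F}_\ell$, up to discarding finitely many terms, which does not affect either kind of average. The growth condition $N\Delta W(N)\to\infty$ should come from temperedness. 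If $g$ is the continuous function with $g^{(m)}$ decreasing to $0$ and $xg^{(m)}(x)\to\infty$, then repeated use of the mean value theorem writes $\Delta^{\ell}f(N)=g^{(\ell)}(\xi_N)$ with $\xi_N\in[N-\ell,N]$. One must also check that the level $m$ in the tempered definition matches $\ell$. It should, since $\Delta^{\ell-1}f\to\infty$ while $\Delta^\ell f\to 0$ singles out the same level for the derivatives. Then $N\Delta W(N)\ge \xi_N g^{(\ell)}(\xi_N)\to\infty$ by monotonicity. The statement of Theorem \ref{thm:A} already asserts these properties, so I will mostly cite it.

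For part (a), let $x_n=T^{\lfloor p_1(\Delta)f(n)\rfloor}h_1\cdots T^{\lfloor p_k(\Delta)f(n)\rfloor}h_k$ in the Banach space $Y=L^2(X,\mathscr{B},\mu)$. This sequence is bounded by $\prod_i\|h_i\|_\infty$. Theorem \ref{thm:D}(a) says the uniform weighted limit exists; call it $L$. Theorem \ref{thm:A} then gives that the Cesàro averages converge to $L$, which proves (a).

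For part (b), take $Y=\mathbb{R}$ and $x_n=\mu(A\cap T^{-\lfloor p_1(\Delta)f(n)\rfloor}A\cap\cdots)$, which lies in $[0,1]$. Theorem \ref{thm:D}(b) gives a positive weighted limit $L$, and Theorem \ref{thm:A} transfers it to the Cesàro limit, which is therefore equal to $L>0$. The only mild obstacle is verifying that $f$ satisfies the hypotheses of both theorems with the same $\ell$. This is precisely the matching of the tempered level with the $\mathscr{F}_\ell$ level discussed above. Everything else is a direct combination of the two theorems.
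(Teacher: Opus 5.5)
Your proposal is correct and follows the paper's route: the paper says Theorem \ref{thm:BMR_conjecture} follows immediately by feeding the uniform $W$-weighted limits from Theorem \ref{thm:D} into Theorem \ref{thm:A}, using $Y=L^2(X,\mathscr{B},\mu)$ for (a) and $Y=\R$ for (b). Your extra checks are sound elaborations of what the paper assumes in the statement of Theorem \ref{thm:A}: that the tempered level matches the $\mathscr{F}_\ell$ level, and that $N\Delta W(N)\to\infty$ via the mean value theorem.
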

Using the identity $f(n-k) = (1+\Delta)^kf(n)$, we obtain a special case of Theorem \ref{thm:BMR_conjecture}, which extends not only Theorem \ref{thm:D} but is also an extension of \cite[Theorem 5.6]{Fran22}. 
\begin{corollary}
    Let $(X,\mathscr{B},\mu,T)$ be an invertible probability measure preserving system. Let $f\in \mathscr{F}$ be a tempered function and let $k\in \N$.
\begin{enumerate}[label = (\alph*)]
        \item For any $h_1,\dots,h_k \in L^{\infty}(X,\mathscr{B},\mu)$, the limit
    \begin{equation}
       \lim_{N\to\oo}\frac{1}{N}\sum_{n=1}^NT^{\floor{f(n)}}h_1\cdots T^{\floor{f(n+k)}}h_k 
    \end{equation}
     exists in $L^2(X,\mathscr{B},\mu)$.
    \item For any $A \in\mathscr{B}$ with $\mu(A) > 0$, the limit
    \begin{equation}
          \lim_{N\to\oo}\frac{1}{N}\sum_{n=1}^N\mu(A\cap T^{-\floor{f(n)}}A\cap\cdots\cap T^{-\floor{f(n+k)}}A)
    \end{equation}
    exists and is positive.
    \end{enumerate}
\end{corollary}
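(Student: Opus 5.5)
The plan is to deduce the corollary directly from Theorem \ref{thm:BMR_conjecture} by choosing the polynomials $p_1,\dots,p_k$ so that $p_i(\Delta)f$ reproduces the shifted values $f(n+i)$. The identity $f(n-j)=(1+\Delta)^j f(n)$ expresses backward shifts, not forward ones. So I will first shift the summation index and use the fact that a bounded index shift does not affect Ces\`aro limits.

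\textbf{Reindexing.} Set $m=n+k$. The summand in (a) becomes
\[
T^{\floor{f(m-k)}}h_1\cdots T^{\floor{f(m)}}h_k .
\]
I pair the shifts $f(m-k+i)$ with the functions in the order displayed. Whatever the precise indexing convention (the display lists $k+1$ shifts $f(n),\dots,f(n+k)$, so one may take $h_0=1$, or include an extra function), every shift has the form $f(m-j)$ with $0\le j\le k$.

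\textbf{Choosing the polynomials.} For such $j$ put $p(x)=(1+x)^j\in\Z[x]$. From $\Delta f(m)=f(m)-f(m-1)$ we get $(1+\Delta)f(m)=f(m-1)$. Iterating gives $(1+\Delta)^j f(m)=f(m-j)$ for $m>j$; for the finitely many small $m$ the convention $\Delta f(1)=f(1)$ only alters finitely many terms. Hence each exponent $\floor{f(m-j)}$ equals $\floor{p(\Delta)f(m)}$ for all large $m$. Theorem \ref{thm:BMR_conjecture}(a) then gives existence in $L^2$ of
\[
\lim_{N\to\oo}\frac1N\sum_{m=1}^N T^{\floor{p_0(\Delta)f(m)}}h_0\cdots T^{\floor{p_k(\Delta)f(m)}}h_k .
\]

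\textbf{Transferring back.} The average $\frac1N\sum_{n=1}^N x_{n+k}$ differs from $\frac1N\sum_{m=1}^N x_m$ by at most $2k\sup\|x_m\|/N$. This tends to $0$ because all terms are bounded by $\prod\|h_i\|_\infty$ (each $T$ is an isometry). Changing finitely many terms likewise does not affect the limit, so (a) follows.

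For (b) the argument is the same with $\mu(A\cap T^{-\floor{f(n)}}A\cap\cdots)$. After reindexing, the exponents are again $p(\Delta)f(m)$ with $p=(1+x)^j$. Existence and positivity of the limit come from Theorem \ref{thm:BMR_conjecture}(b) and transfer through the shift, since positivity of a limit is unaffected by $O(k/N)$ perturbations.

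There is no real obstacle beyond this bookkeeping: the direction of the shift, the boundary convention for $\Delta$ at small $n$, and matching the number of functions to the number of shifts. All the substance lies in Theorem \ref{thm:BMR_conjecture}, which itself combines Theorem \ref{thm:A} with Theorem \ref{thm:D}.
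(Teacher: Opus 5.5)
Your overall strategy is the paper's own: the corollary is obtained as a special case of Theorem~\ref{thm:BMR_conjecture} by choosing polynomials $p_j$ with $p_j(\Delta)f(m)=f(m-j)$. Your explicit bookkeeping is correct, and the paper leaves all of it implicit: the reindexing $m=n+k$, the finitely many boundary terms, the $O(k/N)$ shift estimate, and the mismatch between $k$ functions and $k+1$ shifts.

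However, the one identity the argument rests on is false. With the paper's convention $\Delta f(m)=f(m)-f(m-1)$,
\[
(1+\Delta)f(m)=2f(m)-f(m-1)\neq f(m-1).
\]
So with $p(x)=(1+x)^j$ the exponents $\floor{p(\Delta)f(m)}$ are not $\floor{f(m-j)}$. For example, with $f(n)=n^{3/2}$ and $j=1$ you get $\floor{2m^{3/2}-(m-1)^{3/2}}$. As written, Theorem~\ref{thm:BMR_conjecture} is therefore applied to a different average from the one in the corollary.

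The fix is to use $1-\Delta$ instead. One has $(1-\Delta)g(m)=g(m-1)$ for $m\ge 2$ and $(1-\Delta)g(1)=0$. Hence $(1-\Delta)^jf(m)=f(m-j)$ for $m>j$, so you should take $p_j(x)=(1-x)^j\in\Z[x]$. Note that the sentence preceding the corollary in the paper states the identity as $f(n-k)=(1+\Delta)^kf(n)$, with the same sign slip, so it should not be relied on as written. The $(1+\Delta)$ form is the correct identity for the forward difference $f(n+1)-f(n)$ mentioned in the paper's footnote, where $f(n+j)=(1+\Delta)^jf(n)$ holds directly and no reindexing is needed. With $p_j=(1-x)^j$ in place of $(1+x)^j$, the rest of your argument goes through verbatim for both (a) and (b).
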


Now we will introduce some notation in order to formulate our main theorem.

\begin{definition}
Let $W:\N\rightarrow \R$ be a function which increases to $\infty$ and let $(x_n)_{n\in \N}$ be a bounded sequence in a Banach space $Y$. 
\begin{itemize}
    \item For $N\in \N$, define the \emph{N-th $W$-weighted average} of $(x_n)_{n\in \N}$ by
\begin{equation}
    \E_{n\leq N}^Wx_n= \frac{1}{W(N)}\sum_{n=1}^N\Delta W(n) x_n.
\end{equation}
\item More generally, we define the $W$-weighted average of $(x_n)_{n\in \N}$ over the interval $[M,N]$ to be
\begin{equation}
      \E_{n\in [M,N]}^Wx_n = \frac{1}{W(N)-W(M)}\sum_{n=M}^N\Delta W(n) x_n.
\end{equation}
\item Define the \emph{uniform $W$-weighted average} of $(x_n)_{n\in \N}$ as 
\begin{equation}
     \E_{\text{unif}}^W(x_n) =\lim_{W(N)-W(M)\to \oo}\E_{n\in [M,N] }^W(x_n) 
\end{equation}
when the limit exists. The limit $\lim_{W(N)-W(M)\to \oo}$ is taken over all sequences of intervals $[M_j,N_j]$ for $j\in \N$, with $W(N_j)-W(M_j)\to \oo$ as $j\to\oo$.
\item Define the \emph{iterated $W$-weighted averages} of $(x_n)_{n\in \N}$ by $\E(1)_{n\leq N}^Wx_n = \E_{n\leq N}^Wx_n$ and
    \begin{equation} 
    \E(k+1)_{n\leq N}^Wx_n = \E_{n\leq N}^W(\E(k)_{m\leq n}^Wx_m) = \frac{1}{W(N)}\sum_{n=1}^N\Delta W(n) \cdot\E(k)_{m\leq n}^Wx_m
    \end{equation}
    for $k\in \N$.
\item For $L\in Y$, we say that $\E(\oo)^Wx_n = L$ if
    \begin{equation} 
\lim_{k\to\oo}\limsup_{N\to\oo}|\E(k)_{n\leq N}^W(x_n -L)|=0.
    \end{equation}
\end{itemize}
\end{definition}
\begin{example}
    When $W(N) = N$, $\E_{n\leq N}^Wx_n$ denotes the usual \Cesaro averages and we will write $\E_{n\in [M,N]}x_n$ instead of $\E^W_{n\in [M,N]}x_n$ so that we have $\E_{n\in [M,N]}x_n =   \E_{n\in [M,N]}^Wx_n = \frac{1}{N-M}\sum_{n=M}^{N}x_n$.
    When $U(N) = \log(N)$, $\E_{n\leq N}^Ux_n$ is the logarithmic average
    \begin{equation}
         \E_{n\leq N}^Ux_n = \frac{1}{\log N}\sum_{n=1}^N\Delta \log(n) x_n =  \frac{1}{\log N}\sum_{n=1}^N\frac{x_n}{n} + o_{N\to\oo}(1).
    \end{equation}

    It will be useful to consider weighted averages with respect to functions which grow faster than any polynomial. For example, let $V(x) = e^{\sqrt{x}}$. Then
    \begin{equation}
         \E_{n\leq N}^Vx_n = \frac{1}{e^{\sqrt{N}}}\sum_{n=1}^N\Delta (e^{\sqrt{n}})\cdot x_n =  \frac{1}{e^{\sqrt{N}}}\sum_{n=1}^N\frac{e^{\sqrt{n}}x_n}{2\sqrt{n}}  + o_{N\to\oo}(1).
    \end{equation}
\end{example}

\begin{example}
    When $W(N) = N$, the uniform $W$-weighted average of $(x_n)_{n\in \N}$ is the usual notion of the uniform \Cesaro limit
    \begin{equation}
       \lim_{N-M\to\oo} \frac{1}{N-M}\sum_{n=M}^Nx_n
    \end{equation}
    and $\E_{\unif}x_n = L$ is equivalent to the statement that $\lim_{N\to\oo}\frac{1}{N}\sum_{n=k}^{N+k}x_n$ converges to $L$ uniformly in $k$.

    When $U(N) = \log(N)$, the uniform $U$-weighted average of $(x_n)_{n\in\N}$ is 
    \begin{equation}
         \lim_{\log(N)-\log(M)\to\oo}\E_{n\in [M,N]}^U(x_n) = \lim_{N/M\to\oo}\frac{1}{\log \frac{N}{M}}\sum_{n=M}^N\Delta \log(n)\cdot x_n,
    \end{equation}
and $\E_{\unif}^{\log}x_n =L$ is equivalent to the statement that $\lim_{a\to\oo}\frac{1}{\log a}
\sum_{n=k}^{ak}\frac{x_n}{n}$ converges to $L$ uniformly in $k$.
\end{example}

We now show that Theorem \ref{thm:A} follows from our main result, Theorem \ref{thm:general_main}.

\begin{maintheorem}\label{thm:general_main}
 Let $V:\N\rightarrow (0,\oo)$ be a function which increases to $\oo$ with $\Delta \log(V(N))$ eventually decreasing to $0$ such that $\lim_{N\to\oo}\frac{\Delta \log(V(N))}{\Delta \log(N)}=\oo$. Let $Y$ be a Banach space, let $(x_n)_{n\in \N}\subset Y$ be a bounded sequence, and let $L\in Y$.
    Consider the following statements:
   \begin{enumerate}[label = (\arabic*)]
   \item $\lim_{N\to\oo}\E_{n\leq N}^Vx_n =L$,
         \item $\E(\oo)^Vx_n = L$,
       \item $\lim_{N\to\oo}\E_{n\leq N}^Ux_n = L$ for each function $U:\N\rightarrow (0,\oo)$ which increases to $\oo$ and satisfies
       \begin{equation}
       \label{eq:U_condition}
       \lim_{N\to\oo}\frac{V(N-1)}{\Delta U(N)}\left(\frac{\Delta U(N)}{\Delta V(N)}-\frac{\Delta U(N-1)}{\Delta V(N-1)}\right)=-1,
       \end{equation}
       \item $\lim_{N\to\oo} \E_{n\in [N-s(N),N]}x_n = L$ for each nondecreasing function $s:\N\rightarrow\N$ satisfying 
       \begin{equation} \label{eq:s_condition}
       \lim_{N\to\oo}s(N) \cdot \Delta \log(V(N))=\oo
       \end{equation}
       and $s(N)\leq N-1$ for all $N\in \N$,
        \item $\E_{\text{unif}}^{\log V}x_n = L$.
        
   \end{enumerate}
  We have that  
  \[
  (1)\implies (2)\iff (3) \iff (4)\iff (5).
  \]
\end{maintheorem}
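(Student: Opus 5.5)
The plan is to route every implication through statement (4), the short-interval Ces\`aro averages, since they are the most "local" object and each of the other averages can be expressed as a superposition of them. Throughout, put $w(N)=\Delta\log V(N)$. It is eventually decreasing to $0$ and satisfies $w(N)/\Delta\log N\to\oo$, so $Nw(N)\to\oo$. On a window $[N-s,N]$ with $s\,w(N)=O(1)$, the weight $\Delta\log V$ is essentially constant. Equivalently, $\log V$ is approximately linear with slope $w(N)$ there.

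\textbf{(4)$\iff$(5).} Write $G=\log V$. For (5)$\implies$(4), take $s$ with $s(N)w(N)\to\oo$. Since $w$ is eventually decreasing, on $[N-s(N),N]$ the quotient $w(n)/w(N)$ is at least $1$ and is at most $w(N-s)/w(N)$. I want the weighted average $\E^{G}_{n\in[N-s,N]}x_n$ to be close to the unweighted one, and $G(N)-G(N-s)\ge s\,w(N)\to\oo$, so (5) gives the weighted one tends to $L$. If the weight varies too much across the window, I would shrink $s$ to some $s'\le s$ with $s'w(N)\to\oo$ but with the variation of $w$ over the window tending to $0$. I would then cover $[N-s,N]$ by such subwindows and average, using boundedness of $x_n$. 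Subwindows chosen from the right endpoint automatically have $w$ controlled by monotonicity.

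For (4)$\implies$(5), partition $[M,N]$ with $G(N)-G(M)\to\oo$ into consecutive blocks $[n_j-s_j,n_j]$ on which $G$ increases by about a large constant $K$. On each block, (4) applies via a suitable nondecreasing $s$ built from $K/w$, and $w$ varies by a factor $\le e^{O(K)}$. This is the delicate point. Instead I would take blocks where $G$ increases by $K$ with $K\to\oo$ slowly, yet $w$ is nearly constant on each block. Near-constancy is where I must use that $w$ is decreasing and $w/\Delta\log N\to\oo$: a block of length $\approx K/w(n)$ is $o(n)$ when $K$ grows slowly. Still, I must check that $w$ does not halve within $o(n)$ steps; decreasing alone does not forbid this. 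I expect this to be the main obstacle. The fallback is to weight blocks by their $G$-increment and use Abel summation, which tolerates variation of $w$ since it is monotone.

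\textbf{(2),(3) vs (4).} The condition \eqref{eq:U_condition} says $\Delta U/\Delta V$ behaves like $1/V$ up to a slowly varying factor, i.e.\ $U$ is "$\log V$-like". I would compute the kernel of $\E^U_{n\le N}$ written as an average of $V$-averages. Via summation by parts, $\E^U_{n\le N}x_n=\sum_m c_{N,m}\,\E^V_{n\le m}x_n$ with nonnegative $c_{N,m}$ asymptotically summing to $1$, by \eqref{eq:U_condition}. The iterated averages $\E(k)^V$ have the explicit kernel $\Delta V(n)(\log V(N)-\log V(n))^{k-1}/((k-1)!V(N))$, a Gamma-type distribution in the variable $\log V$. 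Thus both (2) and (3) are statements that averages over $\log V$-scale windows converge, with the windows spreading as $k\to\oo$. Then (5) implies both by splitting the kernel into windows of $\log V$-length $K$ on which (5) applies. Conversely, (3) implies (4) by choosing $U$ so that the $U$-average concentrates on the desired short window, or by taking differences of two admissible $U$'s. Likewise, (2) implies (5) by a Tauberian argument: a Gamma kernel with large $k$ approximates a uniform window in $\log V$-scale after differencing in $N$.

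\textbf{(1)$\implies$(2).} This is immediate by induction: if $\E^V_{n\le N}(x_n-L)\to0$, then each iterate also tends to $0$, since $\E^V$ is regular because $V\to\oo$.
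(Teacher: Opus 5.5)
Your outline of the implications out of (5) is sound. Cutting the Gamma-type kernel of $\E(k)^V$, whose maximum is $\approx(2\pi k)^{-1/2}$, into $\log V$-windows of fixed length is the same mechanism as the paper's Schatte-type argument. The analogous cut works for $\E^U$, because \eqref{eq:U_condition} is equivalent to $\Delta\log\rho=o(\Delta\log V)$ for $\rho=V\Delta U/\Delta V$. A direct (2)$\Rightarrow$(5) is also viable, but by summing in $j$ rather than differencing in $N$: $\sum_{j=k}^{2k-1}e^{-t}t^{j-1}/(j-1)!$ is the indicator of $t\in[k,2k]$ up to edges of width $O(\sqrt{k})$.

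The genuine gap is that neither implication linking (4) to the rest, (4)$\Rightarrow$(5) or (3)$\Rightarrow$(4), is actually supplied.

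\textbf{(4)$\Rightarrow$(5).} Your worry is justified: the hypotheses allow $w$ to halve in one step (e.g.\ $w=2^{-j}$ on $[4^j,4^{j+1})$). So blocks with nearly constant $w$ need not exist, and the same objection hits your (5)$\Rightarrow$(4) sketch. The fallback must be made exact. With $W=\log V$, you need $c_{N,k}\ge 0$ supported on $[A,B]$ with $\sum_k\frac{c_{N,k}}{s(k)}1_{\{k-s(k)\le n\le k\}}=\frac{\Delta W(n)}{W(B)-W(A)}$ for every $n\in[A,B]$. The paper defines these by backward recursion from $k=B$. Subtracting the identities at $n=k+1$ and $n=k$ gives
$\frac{c_{N,k}}{s(k)}=\frac{\Delta W(k)-\Delta W(k+1)}{W(B)-W(A)}+\sum_{j-s(j)=k+1}\frac{c_{N,j}}{s(j)}\ge 0$,
which is exactly where monotonicity of $\Delta W$ enters. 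The only error is the spill-over of windows to the left of $A$, of size at most $\sup_n\|x_n\|\,s(A)\Delta W(A)/(W(B)-W(A))$. So $s$ must be chosen, depending on the given intervals, slowly enough to kill this while keeping $s\,\Delta W\to\oo$. Silverman--Toeplitz then finishes.

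\textbf{(3)$\Rightarrow$(4).} Neither mechanism you name produces the flat Cesàro window. The natural admissible choice $\Delta U=\frac{\Delta V}{V}V^{r}$, with $r\to 0$ slowly and $r\,s\,\Delta\log V\to\oo$, does concentrate $\E^U_{n\le N}$ inside $[N-s(N),N]$. But its profile there is exponentially tilted, and subtracting two admissible averages does not flatten it.

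What is missing is an inversion that uses all the averages $\E^U_{k\le n}x_k$ with $n$ in the window. Since $\Delta U(n)x_n=\Delta\bigl(U(n)\E^U_{k\le n}x_k\bigr)$, summation by parts writes $\E_{n\in[N-s(N),N]}x_n$ as $\sum_n c_{N,n}\E^U_{k\le n}x_k$, where
$c_{N,n}=\frac{U(n)}{s(N)}\bigl(\frac{1}{\Delta U(n)}-\frac{1}{\Delta U(n+1)}\bigr)$ for $N-s(N)\le n<N$, and $c_{N,N}=\frac{U(N)}{s(N)\Delta U(N)}$.
There is also a boundary term of size $\frac{U(N-s(N)-1)}{s(N)\Delta U(N-s(N))}$. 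For Silverman--Toeplitz these coefficients must have bounded absolute sum. That is why the paper replaces $U$ by a $\tilde U$ with $\Delta\tilde U$ nondecreasing and $\Delta\log\tilde U\sim r\,\Delta\log V$, and transfers $\E^U\to L$ to $\E^{\tilde U}\to L$ by a comparison lemma. The requirement $r\,s\,\Delta\log V\to\oo$ kills the boundary term. Taking $r\to0$ slowly, with $\Delta\log r=o(\Delta\log\log V)$, makes $U$ satisfy \eqref{eq:U_condition} and tend to $\oo$. This construction is the heart of the paper's proof of (3)$\Rightarrow$(4) and is absent from your plan.
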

\begin{proof}[Proof of Theorem \ref{thm:A}]
Let $V(N) = e^{W(N)}$ and suppose that 
\[
  \lim_{W(N)-W(M)\to\oo}\frac{1}{W(N)-W(M)}\sum_{n=M}^N \Delta W(n)x_n = L,
\]
i.e. $\E_{\unif}^Wx_n = L$ and so condition (5) of Theorem \ref{thm:general_main} holds. We have that $\Delta \log(V(N))$ is decreasing to $0$ and that
\[
\lim_{N\to\oo}\frac{\Delta \log(V(N))}{\Delta \log(N)} =\lim_{N\to\oo}N\cdot \Delta W(N) = \oo
\]
because of our assumptions on $W$. Thus we can apply Theorem \ref{thm:general_main} to deduce that condition (4) holds. 

Let $s(N)  = N-1$ so that $\E_{n\in [N-s(N),N]}x_n = \E_{n\in [1,N]}x_n = \E_{n\leq N}x_n$ for all $N\in \N$ and $\lim_{N\to\oo}s(N)\cdot \Delta \log(V(N)) = \lim_{N\to\oo}N\cdot \Delta W(N)=\oo$. By Theorem \ref{thm:general_main}, we have that $\E_{n\in [N-s(N),N]}x_n=L$. This completes the proof.
\end{proof}
\begin{remark}\label{remark:hardy_functions}
    The limit in equation (\ref{eq:U_condition}) initially appears unwieldy and impractical. However, let us assume that the limit in equation (\ref{eq:U_condition}) exists. Then we can perform some algebraic manipulations and apply the Stolz-\Cesaro Theorem \cite[Problem 70]{Polya_Szego} to see that 
    \begin{align*}
        &1+\lim_{N\to\oo}\frac{V(N-1)}{\Delta U(N)}\left(\frac{\Delta U(N)}{\Delta V(N)}-\frac{\Delta U(N-1)}{\Delta V(N-1)}\right)\\
        =&\lim_{N\to\oo}
        \frac{
        \Delta V(N)\cdot \frac{ \Delta U(N)}{\Delta V(N)}+V(N-1)\cdot \Delta \left(\frac{ \Delta U(N)}{\Delta V(N)}\right)}
        {\Delta U(N)}\\
        =&\lim_{N\to\oo}\frac{\Delta \left(V(N)\cdot\frac{ \Delta U(N)}{\Delta V(N)}\right)}{\Delta U(N)}=\lim_{N\to\oo}\frac{V(N)\cdot \Delta U(N)}{U(N)\cdot \Delta V(N)}.
    \end{align*}
So the limit in equation (\ref{eq:U_condition}) is equal to $-1$ if and only if $\lim_{N\to\oo}\frac{V(N)\cdot \Delta U(N)}{U(N)\cdot \Delta V(N)}=0$. We can obtain a clearer criterion when we use our assumption that $\lim_{N\to\oo}\Delta \log(V(N))=0$, since we have
\begin{equation}\label{eq:log_and_derivaitve}
-\Delta \log V(N) = \log\left(\frac{V(N-1)}{V(N)}\right) = \log\left(1-\frac{\Delta V(N)}{V(N)}\right)= \frac{\Delta V(N)}{V(N)}+\left(\frac{\Delta V(N)}{V(N)}\right)^2+\cdots
\end{equation}
and so $\lim_{N\to\oo}\Delta \log V(N) = 0$ if and only if $\lim_{N\to\oo}\frac{\Delta V(N)}{V(N)}=0$.
It follows that 
\[
\lim_{N\to\oo}\frac{V(N)\cdot \Delta U(N)}{U(N)\cdot \Delta V(N)}=0 \quad \text{ if and only if }\quad  \lim_{N\to\oo}\frac{\Delta \log(U(N))}{\Delta \log(V(N))}=0.
\]
One more application of the Stolz-\Cesaro Theorem shows that if $\lim_{N\to\oo}\frac{\Delta \log(U(N))}{\Delta \log(V(N))}=0$ then $\lim_{N\to\oo}\frac{ \log(U(N))}{ \log(V(N))}=0$. In particular, when the limit in equation (\ref{eq:U_condition}) is known to exist, say when $U$ and $V$ are contained in the same Hardy\footnote{A Hardy field is a field of real valued functions which is closed under derivation, under the equivalence relation that two functions are equal if they are equal outside of a compact set. What is important for our purposes is any function contained in a Hardy field is eventually monotone and so any limit involving functions contained in the same Hardy field will exist in $\R\cup\{-\oo,+\oo\}$. For more details, see \cite{Boshernitzan}.} field, the assumption that equation (\ref{eq:U_condition}) holds can be replaced with the assumption that $\lim_{N\to\oo}\frac{\log(U(N))}{\log(V(N))}=0$.
\end{remark}
Remark \ref{remark:hardy_functions} gives a special case of Theorem \ref{thm:general_main}.
\begin{maintheorem}\label{thm:general_main_Hardy}
 Let $V$ be a function which is contained in a Hardy field and tends to $\oo$. Suppose that $\lim_{N\to\oo}\frac{ \log(V(N))}{ \log(N)}=\oo$ and $\lim_{N\to\oo}\frac{ \log(V(N))}{ N}=0$. Let $Y$ be a Banach space, let $(x_n)_{n\in \N}\subset Y$ be a bounded sequence, and let $L\in Y$.
    Consider the following statements.
   \begin{enumerate}[label = (\arabic*)]
   \item $\lim_{N\to\oo}\E_{n\leq N}^Vx_n =L$,
         \item $\E(\oo)^Vx_n = L$,
       \item $\lim_{N\to\oo}\E_{n\leq N}^Ux_n = L$ for each function $U$ which is contained in the same Hardy field as $V$, tends to $\oo$, and satisfies
       $
       \lim_{N\to\oo}\frac{\log(U(N))}{\log(V(N))}=0,
      $
       \item $\lim_{N\to\oo} \E_{n\in [N-s(N),N]}x_n = L$ for each nondecreasing function $s:\N\rightarrow\N$ satisfying 
       $
       \lim_{N\to\oo}s(N) \cdot \Delta \log(V(N))=\oo
       $
       and $s(N)\leq N-1$ for all $N\in \N$,
        \item $\E_{\text{unif}}^{\log V}x_n = L$.
        
   \end{enumerate}
  We have that  
  \[
  (1)\implies (2)\iff (3) \iff (4)\iff (5).
  \]
\end{maintheorem}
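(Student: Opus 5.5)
This theorem is a special case of Theorem \ref{thm:general_main}, so the plan is to check that its hypotheses hold. Then, using Remark \ref{remark:hardy_functions}, we check that the family of functions $U$ in (3) here matches the family in (3) there, up to an inclusion that is harmless. Statements (1), (2), (4), (5) are literally the same in both theorems. So it suffices to show three things. First, $V$ satisfies the hypotheses of Theorem \ref{thm:general_main}. Second, every $U$ admitted in (3) of the present theorem satisfies (\ref{eq:U_condition}); then (3) of Theorem \ref{thm:general_main} implies (3) here. Third, (3) here implies some condition in the chain, e.g. (5).

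For the hypotheses on $V$, I will work with $g=\log V$, which lies in the Hardy field (passing to the Hardy field generated by it if needed; such fields are closed under $\log$ and composition with $\log$ by Boshernitzan). By hypothesis $g(x)/\log x\to\oo$ and $g(x)/x\to 0$. Since $g'$ is eventually monotone, L'Hôpital gives $g'(x)\to 0$, and $g'$ is eventually positive. Moreover $x g'(x)\to\oo$, since its limit exists in $[0,\oo]$ and equals $\lim g/\log x=\oo$. Next, $\Delta\log V(N)=g(N)-g(N-1)=g'(\xi_N)$ with $\xi_N\in(N-1,N)$, so $\Delta \log V(N)$ eventually decreases to $0$. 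Finally, $\Delta\log V(N)/\Delta\log N\sim N g'(\xi_N)\to\oo$, using $\xi_N\sim N$ and eventual monotonicity of $g'$ to compare $g'(\xi_N)$ with $g'(N)$ and $g'(N-1)$. Also $V$ eventually increases to $\oo$, and positivity can be arranged by modifying finitely many values, which does not affect any of (1)--(5).

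For the $U$'s, take $U$ in the same Hardy field with $U\to\oo$ and $\log U/\log V\to 0$. Then $(\log U)'/(\log V)'\to 0$ by L'Hôpital in the Hardy field. Transferring to differences via the mean value theorem, with the same monotonicity comparison, gives $\Delta\log U(N)/\Delta\log V(N)\to 0$. Remark \ref{remark:hardy_functions} then shows that the limit in (\ref{eq:U_condition}) exists, being built from Hardy-field quantities (up to the discrete-vs-continuous comparison), and equals $-1$. So each such $U$ is admissible in Theorem \ref{thm:general_main}(3). For the reverse direction, I do not need all $U$'s. It suffices to exhibit one family inside the Hardy field, or to show directly that (3)-Hardy implies (5). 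The cleanest route is to note that in Theorem \ref{thm:general_main} the proof of (3)$\Rightarrow$(4) presumably uses only specific $U$'s, such as $U=V^{\epsilon}$-type or $U=\exp(\log V/h)$ with $h$ slowly growing. I would instead take $U=V^{1/k}$ or $U=\exp(\sqrt{\log V})$, which lie in the Hardy field generated by $V$. Then I would re-run that implication, or argue (3)-Hardy $\Rightarrow$ (2) by using iterated averages, which are expressible as weighted averages with weights of the form $V(\log V)^j$.

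The main obstacle is this reverse implication. The paper's proof of (3)$\Rightarrow$(2)/(4) may use non-Hardy $U$'s, so I would need to verify that the specific $U$'s it uses can be chosen from the Hardy field containing $V$, enlarging the field to one closed under $\exp$, $\log$ and powers as Boshernitzan permits. The discrete-vs-continuous comparisons in the forward direction are routine given eventual monotonicity.
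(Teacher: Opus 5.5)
Your reduction is the one the paper itself uses. The paper justifies Theorem \ref{thm:general_main_Hardy} only by Remark \ref{remark:hardy_functions} together with Theorem \ref{thm:general_main}. Your checks of the hypotheses on $V$, and of ``Hardy $U$ with $\log U/\log V\to0$ $\Rightarrow$ (\ref{eq:U_condition})'', are at least at the paper's level of rigor. (A smaller point: Remark \ref{remark:hardy_functions} \emph{presupposes} that the limit in (\ref{eq:U_condition}) exists. That expression involves shifted values such as $U(N-2)$ and $V(N-2)$, which need not lie in the Hardy field, so existence needs a genuine second-order estimate that you and the paper both only assert.)

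The real gap is the one you flag yourself. The reduction gives only (3) of Theorem \ref{thm:general_main} $\Rightarrow$ (3) here, hence $(1)\Rightarrow(2)\Leftrightarrow(4)\Leftrightarrow(5)\Rightarrow(3)$. Nothing in the proposal takes (3) back into the chain, and the paper's one-line justification does not do so either. Your suggested fixes do not work:
\begin{itemize}
\item $U=V^{1/k}$ is not admissible, since $\log U/\log V=1/k\not\to0$.
\item No single fixed admissible $U$, such as $\exp(\sqrt{\log V})$, can suffice. Convergence of $\E^U$ only controls windows with $s(N)\Delta\log U(N)\to\oo$, which is the hypothesis of Lemma \ref{lem:weighted_implies_interval}. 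But (4) allows any $s$ with $s(N)\Delta\log V(N)\to\oo$, however slowly. So $U$ has to be tuned to $s$; this is why the $r$ in Corollary \ref{cor:weighted_implies_interval} is built from $(s(N)\Delta\log V(N))^{-1/2}$ and is in general not a Hardy function.
\item The iterated-average route fails too. By Theorem \ref{thm:schatte_iterating}, $\E(k+1)^V$ has $N$-dependent weights $\Delta V(n)\log(V(N)/V(n))^k$. Expanding them produces weights $\Delta V\cdot(\log V)^j$, whose primitives $U$ satisfy $\log U/\log V\to1$, multiplied by huge alternating coefficients $(\log V(N))^{k-j}$.
\end{itemize}

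What is actually needed is your $\exp(\log V/h)$ idea, with $h$ depending on $s$. For each admissible $s$ you need a function $r\searrow0$ in a Hardy field containing $V$ with $\log r=o(\log\log V)$ and $r(N)s(N)\Delta\log V(N)\to\oo$. Then $U=V^{r}$ is admissible in (3), and $\Delta\log U(N)\sim r(N)\Delta\log V(N)$ is eventually decreasing. The rest of the proof of Corollary \ref{cor:weighted_implies_interval} then goes through: the $\tilde U$ construction with $U$ in place of $T$, \cite[Lemma 7.1]{uniform_distribution}, Lemma \ref{lem:weighted_implies_interval}, and Silverman--Toeplitz. Only this first $U$ has to be Hardy.

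So the crux is a Hardy-field existence statement: a Hardy function, compatible with $V$, tending to $\oo$ below the arbitrary function $s(N)\Delta\log V(N)$. Closing the field under $\exp$, $\log$ and powers need not provide this, since $s(N)\Delta\log V(N)$ may grow more slowly than every iterated logarithm. Moreover, if ``the same Hardy field'' means one fixed field, the implication is false:
\begin{itemize}
\item Take $V=e^{\sqrt x}$ in $\R(\sqrt x,e^{\sqrt x})$. Every admissible $U$ there is $\sim cx^{a/2}$ with $c>0$ and $a\in\N$, so \Cesaro convergence already implies (3).
\item The indicator of $\bigcup_j[4^j,4^j+j2^j]$ has \Cesaro mean $0$.
\item Take $s(N)=\floor{\sqrt{N\log N}}$, so that $s(N)\Delta\log V(N)\to\oo$. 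At $N=4^j+j2^j$ the average over $[N-s(N),N]$ is at least $1$, so (4) fails.
\end{itemize}
To close the argument you would have to prove or cite the needed existence result, and commit to the permissive reading of (3), in which $U$ may be any function sharing some Hardy field with $V$.
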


Of special interest in Theorem \ref{thm:general_main} is condition (4), since this allows us to relate weighted averages to \Cesaro averages along a sequence of intervals.

\begin{example}
    Let $E\subseteq \N$ be a set with 
    \[
    \limsup_{N-M\to\oo}\frac{|E\cap [M,N]|}{N-M}>0.
    \]
     Let $f\in\mathscr{F}_{\ell}$ and let $k\in \N$. Let $W= \Delta^{\ell-1}f$. For each $n\in \N$, let $A(n)$ be the set of $a\in \N$ such that 
\[
\{a,a+\floor{f(n)}, a+\floor{f(n+1)},\dots, a+\floor{f(n+k)}\}\subset E.
\]
From \cite[Corollary F]{Bergelson_Moreira_Richter_2020} we know that the set 
\[
S = \left\{n\in \N: \limsup_{N-M\to\oo}\frac{|A(n)\cap [M,N]|}{N-M}>0\right\}
\]
is \emph{$W$-syndetic}, meaning that 
\begin{equation}\label{eq:w_syndetic}
\limsup_{W(N)-W(M)\to\oo}\frac{|S\cap [M,N]|}{W(N)-W(M)}>0.
\end{equation}
Using Theorem \ref{thm:general_main}, we may rephrase this $W$-syndetic condition as saying that 
\begin{equation}\label{eq:s_W_syndetic}
\limsup_{N\to\oo}\E_{n\in [N-s(N),N]}1_{S}(n) >0
\end{equation}
for any nondecreasing function $s\colon \N\rightarrow \N$ with $\lim_{N\to\oo}s(N)\cdot \Delta W(N)=\oo$ and $s(N)\leq N-1$ for all $N\in \N$.     
More concretely, for the sake of example we can take $f(n) = n^{3/2}$. Fix $k\in \N$ and let $\epsilon>0$. We can take $s(N) = N^{1/2+\epsilon}$ in equation (\ref{eq:s_W_syndetic}) to see that for all large enough $N$, there is an $n\in [N-N^{1/2+\epsilon},N]$ such that $E$ contains a configuration of the form  
    \begin{equation*}
    \{a,a+\floor{n^{3/2}},a+\floor{(n+1)^{3/2}},\dots ,a +\floor{(n+k)^{3/2}}\}
    \end{equation*}
    and moreover, these configurations are abundant in the sense of equation (\ref{eq:w_syndetic}).
\end{example}

\subsection*{Acknowledgments}
The creation of this paper would not have been possible without the generous advice and direction from the author's advisor, Vitaly Bergelson. We would also like to thank Nikos Frantzikinakis for helpful conversation.

\subsection*{Structure of the paper}
The remainder of the paper consists of a proof of Theorem \ref{thm:general_main}.
It is clear that $(1)\implies (2)$ by definition. We will prove the rest of the implications one by one. In Section \ref{section:2_implies_3} we prove $(2)\implies (3)$, in Section \ref{section:3_implies_4} we prove $(3)\implies (4)$, and in Section \ref{section:4_implies_5} we prove $(4)\implies (5)$. Finally, in Section \ref{section:5_implies_2} we prove that $(5)\implies (2)$. 

\section[Section 2]{$(2)\implies (3)$}\label{section:2_implies_3}

In this section we prove the implication $(2)\implies (3)$ in Theorem \ref{thm:general_main}. This implication will follow readily from the following lemma, whose proof appears in an unpublished manuscript authored by Michael Boshernitzan \cite{Boshernitzan87}.
\begin{lemma}\label{thm:slower_wins}
   Let $V,U:\N\rightarrow (0,\oo)$ increase to $\oo$ and suppose that 
   \[
   \lim_{N\to\oo}\frac{V(N-1)}{\Delta U(N)}\left(\frac{\Delta U(N)}{\Delta V(N)}-\frac{\Delta U(N-1)}{\Delta V(N-1)}\right)=-1.
   \]
   Additionally assume that $\limsup_{N\to\oo}\frac{\Delta V(N)}{V(N)}<1$. Let $Y$ be a Banach space and let $(x_n)_{n\in \N}\subseteq Y$ be a bounded sequence. Then
   \begin{equation}\label{eq:boshernitzan}
        \E_{n\leq N}^U(\E_{k\leq n}^Vx_k) = \E_{n\leq N}^Ux_n+o_{N\to\oo}(1),
    \end{equation}
    where the $o_{N\to\oo}(1)$ term depends only on $U,V$ and $\sup_{n\in \N}\|x_n\|$.
\end{lemma}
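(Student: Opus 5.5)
The plan is to swap the order of summation by an Abel summation adapted to the weights. Put $S_n=\sum_{k\le n}\Delta V(k)x_k$, so that $\E^V_{k\le n}x_k=S_n/V(n)$, $\Delta S_n=\Delta V(n)x_n$, and $\|S_n\|\le C V(n)$ with $C=\sup_n\|x_n\|$. Also set $a_n=\Delta U(n)/\Delta V(n)$. Then
\[
\sum_{n=1}^N\Delta U(n)x_n=\sum_{n=1}^N a_n\Delta S_n = a_NS_N-\sum_{n=2}^{N}S_{n-1}\,\Delta a_n .
\]
The hypothesis reads $V(n-1)\Delta a_n=-\Delta U(n)(1+\epsilon_n)$ with $\epsilon_n\to0$. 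Substituting gives
\[
\sum_{n\le N}\Delta U(n)x_n=a_NS_N+\sum_{n=2}^N\Delta U(n)\frac{S_{n-1}}{V(n-1)}+\sum_{n=2}^N\Delta U(n)\,\epsilon_n\frac{S_{n-1}}{V(n-1)} .
\]
The last sum is bounded by $C\sum_{n\le N}\Delta U(n)|\epsilon_n|$. This is $o(U(N))$ because $U\to\infty$ and $\epsilon_n\to0$ (a Cesàro-type argument). The bound depends only on $U$, $V$ and $C$, which gives the required uniformity.

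Next I will show that the boundary term $a_NS_N$ is $o(U(N))$. Its norm is at most $C\,a_NV(N)$, so it suffices to show $a_NV(N)=o(U(N))$. For this I apply the same identity to the constant sequence $x_n\equiv1$, where $S_n=V(n)$:
\[
U(N)-U(1)+O(1)=a_NV(N)+\sum_{n=2}^N\Delta U(n)(1+\epsilon_n)=a_NV(N)+U(N)+o(U(N)).
\]
Hence $a_NV(N)=o(U(N))$ with no Hardy-field or existence-of-limit assumption; this is the rigorous version of the heuristic in Remark \ref{remark:hardy_functions}.

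It then remains to replace $S_{n-1}/V(n-1)$ by $S_n/V(n)$ in the main sum, and this is the step I expect to be the main obstacle. One computes
\[
\frac{S_n}{V(n)}-\frac{S_{n-1}}{V(n-1)}=\frac{\Delta V(n)}{V(n)}\Big(x_n-\frac{S_{n-1}}{V(n-1)}\Big),
\]
so the error is at most $2C\sum_{n\le N}\Delta U(n)\frac{\Delta V(n)}{V(n)}$, and we need this to be $o(U(N))$. Here the assumption $\limsup\Delta V(n)/V(n)<1$ enters: it gives $V(n-1)\ge cV(n)$ with $c>0$, so shifting indices in $V$ only costs bounded factors.

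To get the required smallness I would try the following. Write $\Delta U(n)\Delta V(n)/V(n)=a_n(\Delta V(n))^2/V(n)$. Then use the hypothesis in the form $a_{n-1}=a_n+\Delta U(n)(1+\epsilon_n)/V(n-1)$, and the fact that $a_nV(n)=o(U(n))$. The aim is to show $\sum a_n(\Delta V(n))^2/V(n)\le \sum\Delta(a_nV(n))\cdot O(1)+o(U(N))$, which telescopes to $o(U(N))$. If this direct route fails, the fallback is to perform the Abel summation with the forward difference, producing $S_n/V(n)$ paired with $\Delta U(n+1)$. One then compares $\Delta U(n+1)$ with $\Delta U(n)$ using the same hypothesis together with the bound $V(n-1)\ge cV(n)$.

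Once the shift is handled, dividing the identity by $U(N)$ yields $\E^U_{n\le N}x_n=\E^U_{n\le N}(\E^V_{k\le n}x_k)+o(1)$. Every error term above is controlled by $C$, $U$ and $V$ alone, so this is exactly \eqref{eq:boshernitzan}.
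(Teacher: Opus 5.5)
Your first three steps are correct and coincide with the paper's proof: the same Abel summation (your $S_n$ is the paper's $F(n)$), the same substitution of the hypothesis, and the same Ces\`aro treatment of the $\epsilon_n$-term. Your derivation of $a_NV(N)=o(U(N))$ from the case $x_n\equiv 1$ is a tidy, self-contained replacement for the Stolz--Ces\`aro argument of Remark~\ref{remark:hardy_functions}.

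The gap is the step you flag yourself: replacing $S_{n-1}/V(n-1)$ by $S_n/V(n)$. Neither of your routes closes it.
\begin{itemize}
\item The crude bound $2C\sum_{n\le N}\Delta U(n)\frac{\Delta V(n)}{V(n)}=2C\sum_{n\le N}a_n\frac{(\Delta V(n))^2}{V(n)}$ is not $o(U(N))$ when $\Delta V/V\not\to0$. For $V(N)=2^N$ and $U(N)=N$, which satisfy the hypothesis, it is about $CN$. So no telescoping rearrangement can make it small.
\item The forward-difference fallback needs $\Delta U(n+1)$ and $\Delta U(n)$ to agree up to a total error $o(U(N))$, and the hypothesis does not give this. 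With $\theta_n=a_nV(n)$, the hypothesis is equivalent to $\Delta\theta_n=-\epsilon_n\Delta U(n)$, i.e.\ $\Delta U(n)=\theta_n\frac{\Delta V(n)}{V(n)}$ with $\theta$ slowly varying. So $\Delta U$ inherits any oscillation of $\Delta V(n)/V(n)$.
\end{itemize}

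In fact this step cannot be closed, because under $\limsup\Delta V/V<1$ alone the statement is false. Let $V(1)=1$, $V(n)=2V(n-1)$ for even $n$, and $V(n)=\frac43V(n-1)$ for odd $n\ge3$. Then $b_n:=\Delta V(n)/V(n)$ equals $\frac12$ for even $n$ and $\frac14$ for odd $n\ge3$. Let $\Delta U(n)=b_n$, i.e.\ $U(N)=\sum_{n\le N}b_n$. Then $\Delta U(n)/\Delta V(n)=1/V(n)$ for all $n$, and the quantity in the hypothesis is identically $-1$. Take $x_n=(-1)^n$. Then $y_n=\E^V_{k\le n}x_k$ satisfies $y_n=(1-b_n)y_{n-1}+b_nx_n$. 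It therefore converges to the $2$-periodic orbit taking the value $\frac35$ at even $n$ and $\frac15$ at odd $n$. Consequently $\E^U_{n\le N}y_n\to\frac{7}{15}$, while $\E^U_{n\le N}x_n\to\frac13$. Also $\E^U_{n\le N}y_{n-1}\to\frac13$, exactly as the rigorous part of your argument predicts.

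The paper's proof has the same hole: it disposes of the shift by citing $\Delta U(N)/U(N)\to0$, which holds in this example too. The missing hypothesis is $\Delta V(N)/V(N)\to0$. It is available where the lemma is actually used: in Theorem~\ref{thm:general_main}, $\Delta\log V\to0$, cf.\ \eqref{eq:log_and_derivaitve}. Under it, your crude bound is $o(U(N))$ by the same Ces\`aro argument as for the $\epsilon_n$-term, and that completes your proof.
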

\begin{proof}

Recall that the summation by parts formula says that
\begin{align}\label{eq:summation_by_parts}
        \sum_{n=1}^N\Delta f(n)\cdot g(n) = f(N)\cdot g(N)-\sum_{n=1}^{N-1}f(n)\cdot \Delta g(n+1)
\end{align}
for functions $f,g$ on $\N$. By the hypothesis of the lemma, we may apply the results of Remark \ref{remark:hardy_functions} to obtain $\lim_{N\to\oo}\frac{V(N)\cdot \Delta U(N)}{U(N)\cdot \Delta V(N)}=0$ and hence $\lim_{N\to\oo}\frac{\Delta U(N)}{U(N)}=0$.

We will take the right-hand side of equation (\ref{eq:boshernitzan}) and rewrite it in the form of the left-hand side. Put $F(n) = \sum_{k=1}^n\Delta V(k)\cdot x_k$ so that $\Delta F(n) = \Delta V(n)\cdot x_n$ and $\frac{F(N)}{V(N)}=\E_{n\leq N}^Vx_n$. 

Apply summation by parts to obtain
    \begin{align}
        &\E^U_{n\leq N}x_n =\frac{1}{U(N)}\sum_{n=1}^N\Delta U(n)\cdot x_n =\frac{1}{U(N)}\sum_{n=1}^N\frac{\Delta U(n)}{\Delta V(n)}\cdot \Delta F(n)\\
        =& \frac{1}{U(N)}\left(\frac{\Delta U(N)}{\Delta V(N)}\cdot F(N)- \sum_{n=1}^{N-1}F(n)\cdot \Delta\left(\frac{\Delta U(n+1)}{\Delta V(n+1)}\right) \right)\\
        =& \frac{\Delta U(N)}{U(N)\cdot \Delta V(N)}\cdot F(N)- \frac{1}{U(N)}\sum_{n=1}^{N-1}F(n)\cdot \Delta\left(\frac{\Delta U(n+1)}{\Delta V(n+1)}\right)\label{eq:sum_by_parts}.
        %
    \end{align}
    The first term in (\ref{eq:sum_by_parts}) is equal to $\frac{V(N)\cdot \Delta U(N)}{U(N)\cdot \Delta V(N)}\cdot \frac{F(N)}{V(N)}$, which tends to $0$ as $N\to\oo$ since $\frac{F(N)}{V(N)} = \E_{n\leq N}^Vx_n$ is bounded in norm by $\sup_{n\leq N}\|x_n\|$. The second term in (\ref{eq:sum_by_parts}) is    \begin{align*}\label{eq:sum_by_parts_second_term}
        -& \frac{1}{U(N)}\sum_{n=1}^{N-1}F(n)\cdot \Delta\left(\frac{\Delta U(n+1)}{\Delta V(n+1)}\right)=- \frac{1}{U(N)}\sum_{n=1}^{N-1}F(n)\cdot \left(\frac{\Delta U(n+1)}{\Delta V(n+1)}-\frac{\Delta U(n)}{\Delta V(n)}\right)
        \\
        = -&\frac{1}{U(N)}\sum_{n=1}^{N-1}\Delta U(n+1)\cdot \frac{F(n)}{V(n)}\cdot \frac{V(n)}{\Delta U(n+1)}\left(\frac{\Delta U(n+1)}{\Delta V(n+1)}-\frac{\Delta U(n)}{\Delta V(n)}\right)\\
        =-&\frac{1}{U(N)}\sum_{n=2}^{N}\Delta U(n)\cdot \frac{F(n-1)}{V(n-1)}\cdot \frac{V(n-1)}{\Delta U(n)}\left(\frac{\Delta U(n)}{\Delta V(n)}-\frac{\Delta U(n-1)}{\Delta V(n-1)}\right).
    \end{align*}
    By assumption, we have $\frac{V(n-1)}{\Delta U(n)}\left(\frac{\Delta U(n)}{\Delta V(n)}-\frac{\Delta U(n-1)}{\Delta V(n-1)}\right) = -1+o_{N\to\oo}(1)$.
    

   Note that $\frac{F(n-1)}{V(n-1)} = \E_{k\leq n-1}^Vx_k$ is bounded in norm by $\sup_{n\in \N}\|x_n\|$. So (\ref{eq:sum_by_parts}) is 
\begin{align*}
 - &\frac{1}{U(N)}\sum_{n=2}^{N}\Delta U(n)\cdot \E_{k\leq n-1}^Vx_k\cdot (-1+o_{N\to\oo}(1))\\
 =&\E_{n\leq N}^U(\E_{k\leq n-1}^Vx_k)+o_{N\to\oo}(1)
\end{align*}
and the $o_{N\to\oo}(1)$ term will depend only on $U,V$ and $\sup_{n\leq N}\|x_n\|$, and in particular not on $(x_n)_{n\in \N}$. Finally, recalling that $\lim_{N\to\oo}\frac{\Delta U(N)}{U(N)}=0$, we have 
\[
\E_{n\leq N}^U(\E_{k\leq n-1}^Vx_k)=\E_{n\leq N}^U(\E_{k\leq n}^Vx_k)+o_{N\to\oo}(1).
\]

\end{proof}

\begin{corollary}
    Let $Y$ be a Banach space, let $(x_n)_{n\in \N}\subseteq Y$ be a bounded sequence, and let $L\in Y$. Suppose that $V:\N\rightarrow (0,\oo)$ is a function which increases to $\oo$ with $\limsup_{N\to\oo}\frac{\Delta V(N)}{V(N)}<1$. Let $U:\N\rightarrow (0,\oo)$ be a function which increases to $\oo$ and satisfies
       \begin{equation*}
       \lim_{N\to\oo}\frac{V(N-1)}{\Delta U(N)}\left(\frac{\Delta U(N)}{\Delta V(N)}-\frac{\Delta U(N-1)}{\Delta V(N-1)}\right)=-1.
       \end{equation*}
    If $\E(\oo)^Vx_n = L$ then 
 $\lim_{N\to\oo}\E_{n\leq N}^Ux_n = L$.
\end{corollary}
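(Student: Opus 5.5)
We reduce to the case $L=0$ and then iterate Lemma \ref{thm:slower_wins}. Replacing $x_n$ by $y_n = x_n - L$ gives again a bounded sequence. Since $U$ increases to $\oo$, we have $\E^U_{n\leq N}L = \frac{U(N)-U(0)}{U(N)}L \to L$ (with the convention $\Delta U(1)=U(1)$, this is exactly $L$). It therefore suffices to show that $\E(\oo)^V y_n = 0$ implies $\lim_{N\to\oo}\E^U_{n\leq N}y_n = 0$.

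The key step is an induction on $k$ proving
\[
\E^U_{n\leq N}\bigl(\E(k)^V_{m\leq n}y_m\bigr) = \E^U_{n\leq N}y_n + o_{N\to\oo}(1)
\]
for each fixed $k\in\N$.
\begin{itemize}
\item For $k=1$ this is exactly Lemma \ref{thm:slower_wins} applied to $(y_n)$.
\item For the inductive step, we apply Lemma \ref{thm:slower_wins} to the sequence $z_n = \E(k)^V_{m\leq n}y_m$. This sequence is bounded by $\sup_n\|y_n\|$, because every weighted average is a convex-type combination with total weight $\sum_{n\leq N}\Delta V(n)/V(N) = 1$. The lemma gives
\[
\E^U_{n\leq N}\bigl(\E^V_{m\leq n} z_m\bigr) = \E^U_{n\leq N} z_n + o(1).
\]
Here $\E^V_{m\leq n} z_m = \E(k+1)^V_{m\leq n}y_m$, and the induction hypothesis converts the right side into $\E^U_{n\leq N}y_n + o(1)$.
\end{itemize}
The uniformity clause of the lemma matters here: the error depends only on $U$, $V$ and the sup-norm bound. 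Still, for each fixed $k$ we only need an $o(1)$ error, so uniformity is not essential.

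Now fix $\epsilon>0$. By $\E(\oo)^V y_n = 0$ we may choose $k$ with $\limsup_{n\to\oo}\|\E(k)^V_{m\leq n}y_m\| < \epsilon$. The $U$-average of a bounded sequence whose norms are eventually below $\epsilon$ has $\limsup$ at most $\epsilon$, because $U(N)\to\oo$ and so the contribution of the finitely many initial terms vanishes. Hence
\[
\limsup_{N\to\oo}\|\E^U_{n\leq N}y_n\| \le \limsup_{N\to\oo}\|\E^U_{n\leq N}(\E(k)^V_{m\leq n}y_m)\| \le \epsilon.
\]
Letting $\epsilon\to0$ finishes the proof.

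There is no serious obstacle. The only points to check are that the iterated averages stay bounded, so that the lemma applies at each stage, and that the hypotheses of Lemma \ref{thm:slower_wins}, namely the $-1$ limit condition and $\limsup \Delta V/V<1$, are precisely the assumptions of the corollary. Both are immediate.
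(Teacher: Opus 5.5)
Your proposal is correct and follows the paper's proof: the paper likewise iterates Lemma \ref{thm:slower_wins} $K$ times to get $\E^U_{n\leq N}(\E(K)^V_{k\leq n}x_k)=\E^U_{n\leq N}x_n+o_{N\to\oo}(1)$, then bounds the left side by $\epsilon+o_{N\to\oo}(1)$ using the triangle inequality. The only difference is that you subtract $L$ at the start, whereas the paper applies linearity after the iteration.
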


\begin{proof}

Suppose that $\E(\oo)^Vx_n = L$. Let $\epsilon>0$. We can pick $K\in \N$ such that $|\E_{n\leq N}^V(k)(x_n)-L|<\epsilon$ for all large enough $N$. Now consider $\E_{n\leq N}^U(\E_{k\leq n}^V(K)x_k)$. From equation (\ref{eq:boshernitzan}), 
\begin{align*}
\E_{n\leq N}^U(\E_{k\leq n}^V(K)x_k) = &\E_{n\leq N}^U(\E_{k\leq n}^V(K-1)x_k) +o_{N\to\oo}(1)
\\
= &\E_{n\leq N}^U(\E_{k\leq n}^V(K-2)x_k) +o_{N\to\oo}(1)\\
\vdots\\
=&\E_{n\leq N}^U(\E_{k\leq n}^Vx_k) +o_{N\to\oo}(1)\\
=&\E_{n\leq N}^Ux_n +o_{N\to\oo}(1).
\end{align*}

Using linearity, we have
\begin{equation}\label{eq:K_composition}
\E_{n\leq N}^U(\E_{k\leq n}^V(K)(x_k-L)) = \E_{n\leq N}^Ux_n -L+o_{N\to\oo}(1).
\end{equation}
But we can bound $|\E_{n\leq N}^U(\E_{k\leq n}^V(K)x_k-L)|$ using the triangle inequality,
\begin{align*}
|\E_{n\leq N}^U(\E_{k\leq n}^V(K)x_k-L)| \leq& \E_{n\leq N}^U|\E_{k\leq n}^V(K)(x_k)-L|\\
=& \E_{n\leq N}^U(\epsilon+o_{n\to\oo}(1))\\
=& \epsilon + o_{N\to\oo}(1).
\end{align*}
Taking $\limsup_{N\to\oo}$ of equation (\ref{eq:K_composition}), we find that
\[
\limsup_{N\to\oo}|\E_{n\leq N}^Ux_n -L| \leq \epsilon+o_{N\to\oo}(1).
\]
This holds for each $\epsilon>0$, so we conclude that $\lim_{N\to\oo}\E_{n\leq N}^Ux_n=L$.

\end{proof}

\section[Section 3]{$(3)\implies(4)$}\label{section:3_implies_4}

In this section we prove the implication $(3)\implies (4)$ in Theorem \ref{thm:general_main}. We begin by recalling the Silverman-Toeplitz Theorem \cite[Theorem 2.3.7]{Boos}.

\begin{theorem}[Silverman–Toeplitz]\label{thm:silverman_toeplitz}
    Let $(c_{N,n})_{N,n\in \N}$ be a doubly indexed sequence of complex numbers and let $Y$ be a Banach space. The following are equivalent.
    \begin{enumerate}[label = (\Alph*)]
        \item $\lim_{N\to\oo}\sum_{n\in \N}c_{N,n}y_n = \lim_{n\to\oo}y_n$ for each convergent sequence $(y_n)_{n\in \N}\subseteq Y$.
        \item $(c_{N,n})_{N,n\in \N}$ satisfies each of the following:
        \begin{itemize} 
        \item For each $n\in \N$, $\lim_{N\to\oo}c_{N,n}=0$,
        \item $\lim_{N\to\oo}\sum_{n\in \N}c_{N,n}=1$,
        \item $\limsup_{N\to\oo}\sum_{n\in \N}|c_{N,n}|<\oo$.
        \end{itemize}
    \end{enumerate}
\end{theorem}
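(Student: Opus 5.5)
The plan is to prove (A)$\Rightarrow$(B) by testing (A) on well-chosen sequences, and (B)$\Rightarrow$(A) by a direct $\epsilon$-argument. We work with scalar test sequences $y_n=\lambda_n y$, where $y\in Y$ is a fixed unit vector and $\lambda_n\in\C$; this lets us pass between scalar and vector statements. As is implicit in the statement, we assume that each row $\sum_n c_{N,n}y_n$ converges for every bounded, or at least every convergent, sequence. For finitely supported matrices this is automatic.

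For (B)$\Rightarrow$(A), let $y_n\to y$ and $\epsilon>0$. Set $M=\limsup_N\sum_n|c_{N,n}|<\oo$ and choose $n_0$ with $\|y_n-y\|<\epsilon$ for $n>n_0$. Split
\[
\sum_n c_{N,n}y_n - y=\sum_{n\le n_0}c_{N,n}(y_n-y)+\sum_{n>n_0}c_{N,n}(y_n-y)+\Big(\sum_n c_{N,n}-1\Big)y .
\]
The first sum is finite and each coefficient tends to $0$, so it tends to $0$. The second sum is at most $(M+o(1))\epsilon$ in norm. The third term tends to $0$ by the second condition. Letting $\epsilon\to0$ gives (A).

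For (A)$\Rightarrow$(B), the first two conditions come from particular test sequences. Taking $y_n=\delta_{n,m}y$, which converges to $0$, gives $c_{N,m}\to0$. Taking $y_n=y$ constant gives $\sum_n c_{N,n}\to1$.

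The main obstacle is the third condition, uniform boundedness of $\sum_n|c_{N,n}|$. The cleanest route is functional-analytic. Restricting (A) to scalar sequences, each row defines a bounded functional $\phi_N(\lambda)=\sum_n c_{N,n}\lambda_n$ on the Banach space $c$ of convergent sequences. Its norm is $\sum_n|c_{N,n}|$: finiteness holds because the row series converges for every element of $c_0$, and the norm identity follows by testing against $\lambda_n=\overline{\operatorname{sgn} c_{N,n}}$ truncated to finitely many terms. Since $\phi_N(\lambda)$ converges for every $\lambda\in c$, the Banach–Steinhaus theorem gives $\sup_N\|\phi_N\|<\oo$, which yields the third condition.

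If one prefers to avoid uniform boundedness, the alternative is a gliding-hump construction. Assuming $\sum_n|c_{N,n}|$ is unbounded along a subsequence, use the first condition to choose $N_1<N_2<\dots$ and disjoint blocks $[a_j,b_j]$ on which the row mass of $N_j$ exceeds $j^2$ while its mass outside the block is small. Then set $\lambda_n=\overline{\operatorname{sgn} c_{N_j,n}}/j$ on the $j$-th block. This $\lambda$ tends to $0$, but $|\phi_{N_j}(\lambda)|\ge j-O(1)$, contradicting (A). I would present the Banach–Steinhaus version and mention this construction only as the elementary alternative.
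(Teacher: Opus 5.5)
The paper does not prove this statement. It quotes it from \cite[Theorem 2.3.7]{Boos} and uses it as a black box, so there is no internal argument to compare yours against. Your argument is the standard one and it is correct.

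For (B)$\Rightarrow$(A), the $\epsilon$-splitting works verbatim in $Y$. For large $N$ the rows are absolutely summable, so every series involved converges absolutely. For (A)$\Rightarrow$(B), the following all go through:
\begin{itemize}
\item the scalar test sequences $\lambda_n y$;
\item the identity $\|\phi_N\|=\sum_n|c_{N,n}|$ on $c$;
\item Banach--Steinhaus on $c$.
\end{itemize}
Your gliding hump is also set up correctly: block mass at least $j^2$ against weights $1/j$, with small mass outside the block.

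The paper only ever invokes (B)$\Rightarrow$(A). It does so in Corollary \ref{cor:weighted_implies_interval} via Lemma \ref{lem:weighted_implies_interval}, and in Section \ref{section:4_implies_5}. In both places $c_{N,n}\ge 0$, so the third condition follows from the second. Your short $\epsilon$-argument (with $M=1$) is therefore all the paper needs.

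Three points to tighten.

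(i) You assert that convergence of $\sum_n c_{N,n}\lambda_n$ for every $\lambda\in c_0$ forces $\sum_n|c_{N,n}|<\infty$. This is itself a uniform boundedness fact and deserves a line. Apply Banach--Steinhaus to the partial-sum functionals $\lambda\mapsto\sum_{n\le K}c_{N,n}\lambda_n$, whose norms are $\sum_{n\le K}|c_{N,n}|$. Your gliding hump also relies on this fact, not only on the column condition, to make the tail of row $N_j$ beyond its block small.

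(ii) Choosing a unit vector requires $Y\neq\{0\}$. For $Y=\{0\}$, (A) is trivially true for every matrix and the equivalence fails. This hypothesis is implicit in the statement.

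(iii) Because (B) is phrased with $\limsup$, finitely many rows may have $\sum_n|c_{N,n}|=\infty$. Take $c_{1,n}=1$ for all $n$ and $c_{N,n}=\delta_{N,n}$ for $N\ge 2$. Then (B) holds, but your strict reading of (A), in which every row converges, fails for a constant sequence. So your two directions implicitly use different readings of (A); the (B)$\Rightarrow$(A) argument only yields convergence of the rows for large $N$. There are two fixes:
\begin{itemize}
\item Replace $\limsup$ by $\sup$, as in the classical formulation.
\item Read (A) as requiring convergence only for all sufficiently large $N$. Then (A)$\Rightarrow$(B) needs one more step before Banach--Steinhaus: a gliding-hump or Baire argument showing that only finitely many rows fail to be absolutely summable. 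After that, $(\phi_N)_{N\ge N_0}$ is a family of bounded functionals on all of $c$.
\end{itemize}
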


The Silverman-Toeplitz theorem will allow us to deduce the desired implication once we show that there is a sequence $(c_{N,n})_{N,n\in \N}$ relating the averages of the form $\E_{n\in [N-s(N),N]}$ and the averages of the form $\E_{n\leq N}^U$.

\begin{lemma} \label{lem:weighted_implies_interval}
Let $Y$ be a Banach space, let $U:\N\rightarrow (0,\oo)$ be an increasing function which satisfies $\limsup_{N\to\oo}\frac{\Delta \log(U(N))}{\Delta \log(N)}=\oo$ and $\limsup_{N\to\oo}\frac{\Delta U(N)}{U(N)}=0$.
Additionally, assume that $\Delta U$ is nondecreasing.
Let $s:\N\rightarrow \N$ be a nondecreasing function with $s(N)\leq N-1$ for all $N\in \N$ and 
\[
\lim_{N\to\oo}s(N)\cdot \Delta \log U(N)=\oo.
\]

Then there exists a doubly indexed nonnegative sequence $(c_{N,n})_{N,n\in \N}$ which satisfies each of the conditions in item (B) of Theorem \ref{thm:silverman_toeplitz} and such that for any bounded sequence $(x_n)_{n\in \N}\subset Y$,
\begin{equation}\label{eq:weighted_implies_interval}
\E_{n\in [N-s(N),N]}x_n = \sum_{n=1}^Nc_{N,n}\E^U_{k\leq n}x_k+o_{N\to\oo}(1).
\end{equation}
\end{lemma}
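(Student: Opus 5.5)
The plan is to express the Cesàro average over $[N-s(N),N]$ through the partial weighted sums $G(n)=\sum_{k\le n}\Delta U(k)\,x_k=U(n)\,\E^U_{k\le n}x_k$ by summation by parts, and then to read off $c_{N,n}$ as the resulting coefficients. Write $M=N-s(N)$ and $a(k)=1/\Delta U(k)$. Since $\Delta U$ is nondecreasing, $a$ is nonincreasing. Because $x_k=\Delta G(k)\,a(k)$, formula (\ref{eq:summation_by_parts}) applied on $[M,N]$ gives
\[
\sum_{k=M}^N x_k = G(N)a(N)-G(M-1)a(M)+\sum_{k=M}^{N-1}G(k)\bigl(a(k)-a(k+1)\bigr).
\]
Dividing by $s(N)+1$ and substituting $G(k)=U(k)\E^U_{j\le k}x_j$, I would set
\[
c_{N,k}=\frac{U(k)\bigl(a(k)-a(k+1)\bigr)}{s(N)+1}\quad (M\le k\le N-1),
\]
and $c_{N,k}=0$ otherwise. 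These are nonnegative by the monotonicity of $a$. The two boundary terms are then either absorbed into $o_{N\to\oo}(1)$ or, where needed, attached to $c_{N,N}$ and $c_{N,M-1}$.

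For the top term, $U(N)a(N)/(s(N)+1)=1/\bigl((s(N)+1)\,\Delta U(N)/U(N)\bigr)$. By $\Delta U/U\to 0$ and the expansion (\ref{eq:log_and_derivaitve}), $\Delta U(N)/U(N)\sim \Delta\log U(N)$. The hypothesis $s(N)\Delta\log U(N)\to\oo$ therefore makes this coefficient tend to $0$, so the term is $o(1)$ uniformly in bounded $(x_n)$.

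The Silverman–Toeplitz conditions then reduce to bookkeeping. Testing the identity with $x_n\equiv 1$, so that $\E^U_{k\le n}1=1$, shows that $\sum_k c_{N,k}$ equals $1$ up to the boundary coefficients. Nonnegativity turns the $\limsup$ of $\sum_k|c_{N,k}|$ into the same quantity. The condition $c_{N,n}\to 0$ for fixed $n$ is automatic, because $c_{N,n}=0$ once $M=N-s(N)>n$. This requires $N-s(N)\to\oo$; if that fails, I would use instead that $U(n)\bigl(a(n)-a(n+1)\bigr)$ is a fixed number while $s(N)\to\oo$, which holds since $\Delta\log U\to 0$.

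The main obstacle is the lower boundary term $G(M-1)a(M)/(s(N)+1)$. It carries the wrong sign for nonnegativity, and its size is governed by $1/\bigl(s(N)\,\Delta\log U(M)\bigr)$, evaluated at $M$ rather than $N$. I would first try to prove $s(N)\,\Delta\log U(N-s(N))\to\oo$. For this I would compare $\Delta U(M)/U(M-1)$ with $\Delta U(N)/U(N)$, using that $\Delta U$ is nondecreasing and that $U(N)-U(M-1)\le (s(N)+1)\Delta U(N)$, and split into two cases. If $U(N)/U(M-1)$ stays bounded, the term is comparable to the top one. If $U(N)/U(M-1)\to\oo$, the piece $G(M-1)$ is negligible against $U(N)$. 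In the second case I would rewrite
\[
-G(M-1)a(M)+G(N)a(N)=\bigl(G(N)-G(M-1)\bigr)a(N)-G(M-1)\bigl(a(M)-a(N)\bigr),
\]
and absorb the last piece into the middle sum. If neither route controls the term, I would smooth the indicator kernel of $[M,N]$ near $M$ over a window of length $o(s(N))$, so that the interval average changes only by $o(1)$ while the summation-by-parts coefficients stay nonnegative.
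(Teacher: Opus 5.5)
There is a genuine gap, at the step you yourself flag as the main obstacle.

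Your setup matches the paper's. Your $c_{N,k}$ on $[N-s(N),N-1]$ coincide with the paper's. The paper keeps the top boundary term as $c_{N,N}$ where you discard it as $o(1)$, and that difference is immaterial. You also locate the crux correctly: with $M=N-s(N)$, everything hinges on $\tau_N:=U(M-1)/\bigl(s(N)\,\Delta U(M)\bigr)\to 0$. But you do not prove this, and none of your proposed routes would.

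In your first case, the ratio of the lower to the top coefficient is $\frac{U(M-1)}{U(N)}\cdot\frac{\Delta U(N)}{\Delta U(M)}$. Convexity plus the hypothesis at $N$ gives no upper bound on $\Delta U(N)/\Delta U(M)$, so \emph{comparable to the top one} is unjustified.

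Your second case is circular. After the regrouping, the middle sum becomes $\sum_{k=M}^{N-1}\bigl(G(k)-G(M-1)\bigr)\bigl(a(k)-a(k+1)\bigr)$. Since $G(k)-G(M-1)$ is not a nonnegative multiple of $\E^U_{j\le k}x_j$, the same negative weight on $\E^U_{j\le M-1}x_j$ reappears.

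Smoothing does not help either. Nonnegativity of the $c_{N,k}$ forces $h(j)=\sum_{k\ge j}c_{N,k}/U(k)$ to be nonincreasing. So the weight $\Delta U(j)h(j)$ that lands on $x_j$ can never be cut off below $M$, and it carries mass at least $h(M)\,U(M-1)$ there. You are then back to estimating essentially the same quantity. What is missing is the use of the hypothesis at the point $M$ itself, not just at $N$.

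The fix is short. For $M\ge 2$ we have $\Delta\log U(M)=\log\bigl(1+\Delta U(M)/U(M-1)\bigr)\le \Delta U(M)/U(M-1)$. Since $s$ is nondecreasing and $M\le N$, also $s(M)\le s(N)$. Hence $\tau_N\le 1/\bigl(s(N)\,\Delta\log U(M)\bigr)\le 1/\bigl(s(M)\,\Delta\log U(M)\bigr)$. This tends to $0$ along any sequence of $N$ with $M\to\oo$. Along $N$ with $M$ bounded, $U(M-1)/\Delta U(M)$ takes finitely many values while $s(N)\to\oo$, so $\tau_N\to 0$ there as well.

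The paper's proof arrives at the same tail $\frac{U(N-s(N)-1)}{s(N)\,\Delta U(N-s(N))}\,\E^U_{n\le N-s(N)-1}x_n$. However, in equation (\ref{eq:tail_goes_to_0}) it rewrites this with $\Delta U(N)$ in place of $\Delta U(N-s(N))$. That is not an identity. Since $\Delta U$ is nondecreasing, the rewritten expression bounds the tail from below rather than from above. So the paper's argument needs exactly this monotonicity-of-$s$ step at that point too.
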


\begin{proof}

Define $(c_{N,n})_{N,n\in \N}$ by 
\[
c_{N,n} = 
\begin{cases}
     \frac{1}{s(N)}\cdot U(N)\cdot \frac{1}{\Delta U(N)} &\text{ if } n=N\\
    \frac{1}{s(N)}\cdot U(n)\cdot \left(\frac{1}{\Delta U(n)}-\frac{1}{\Delta U(n+1)}\right) &\text{ if } N-s(N)\leq n\leq N-1\\
    0  &\text{ otherwise }\\
\end{cases}
\]
Each $c_{N,n}$ is nonnegative since $\Delta U$ is nondecreasing and it is clear that $\lim_{N\to\oo}c_{N,n}=0$ for each fixed $n$ since $s(N)\to \oo$ as $N\to\oo$. Next we will show that equation (\ref{eq:weighted_implies_interval}) holds. Switching the order of summation, we have
\begin{align*} 
&\sum_{n=1}^Nc_{N,n}\E^U_{k\leq n}x_k= \sum_{n=1}^Nc_{N,n}\frac{1}{U(n)}\sum_{k=1}^n\Delta U(k) x_k=\sum_{k=1}^Nx_k\cdot \left(\Delta U(k)\sum_{n=k}^N\frac{c_{N,n}}{U(n)}\right).
\end{align*}
We will show the following two equations, from which equation (\ref{eq:weighted_implies_interval}) follows,
\begin{equation}\label{eq:w_implies_i_1}
    \Delta U(k)\sum_{n=k}^N\frac{c_{N,n}}{U(n)}=\frac{1}{s(N)}\text{ for }N-s(N)\leq k\leq N
\end{equation}
and 
\begin{equation}\label{eq:w_implies_i_2}
    \left|\sum_{k=1}^{N-s(N)-1}x_k\cdot \left(\Delta U(k)\sum_{n=k}^N\frac{c_{N,n}}{U(n)}\right) \right|\to 0 \text{ as }N\to\oo.
\end{equation}

For (\ref{eq:w_implies_i_1}), if $k=N$ we have
\[
 \Delta U(k)\sum_{n=k}^N\frac{c_{N,n}}{U(n)} =  \Delta U(N)\frac{c_{N,N}}{U(N)}  = \Delta U(N)\frac{\frac{1}{s(N)}\cdot U(N)\cdot \frac{1}{\Delta U(N)}}{U(N)}= \frac{1}{s(N)}, 
\]
and if $N-s(N)\leq k<N$ then 
\begin{align*}
     &\Delta U(k)\sum_{n=k}^N\frac{c_{N,n}}{U(n)}  = \Delta U(k)\cdot \frac{c_{N,N}}{U(N)}+\Delta U(k)\sum_{n=k}^{N-1}\frac{c_{N,n}}{U(n)} \\
     =& \Delta U(k)\cdot \frac{ \frac{1}{s(N)}\cdot U(N)\cdot \frac{1}{\Delta U(N)}}{U(N)}+\Delta U(k)\sum_{n=k}^{N-1}\frac{ \frac{1}{s(N)}\cdot U(n)\cdot \left(\frac{1}{\Delta U(n)}-\frac{1}{\Delta U(n+1)}\right)}{U(n)} \\
     =& \frac{\Delta U(k)}{s(N)}\left(\frac{1}{\Delta U(N)}+\sum_{n=k}^{N-1} \left(\frac{1}{\Delta U(n)}-\frac{1}{\Delta U(n+1)}\right)\right)=\frac{\Delta U(k)}{s(N)}\cdot \frac{1}{\Delta U(k)} = \frac{1}{s(N)}.
\end{align*}

Turning our attention to (\ref{eq:w_implies_i_2}), we can observe that $c_{N,k} = 0$ for $k<N-s(N)$ and so as above we have
\begin{align*}
&\Delta U(k)\sum_{n=k}^N\frac{c_{N,n}}{U(n)} =  \Delta U(k)\cdot\sum_{n=N-s(N)}^N\frac{c_{N,n}}{U(n)} \\
=&  \frac{\Delta U(k)}{s(N)}\left(\frac{1}{\Delta U(N)}+\sum_{n=N-s(N)}^{N-1} \left(\frac{1}{\Delta U(n)}-\frac{1}{\Delta U(n+1)}\right)\right)\\
=&   \frac{\Delta U(k)}{s(N)}\cdot \frac{1}{\Delta U(N-s(N))}.
\end{align*}

Then 
\begin{align}
&\sum_{k=1}^{N-s(N)-1}x_k\cdot \left(\Delta U(k)\sum_{n=k}^N\frac{c_{N,n}}{U(n)}\right) = \sum_{k=1}^{N-s(N)-1}x_k\cdot  \frac{\Delta U(k)}{s(N)}\cdot \frac{1}{\Delta U(N-s(N))} \\
=& \frac{1}{s(N)\cdot \Delta \log U(N)}\cdot\frac{\Delta \log U(N)}{\frac{\Delta U(N)}{U(N)}}\cdot \frac{U(N-s(N)-1)}{U(N)}\cdot \E_{n\leq N-s(N)-1}^Ux_n.\label{eq:tail_goes_to_0}
\end{align}
Recall that $\E_{n\leq N-s(N)-1}^Ux_n$ is bounded and that $\frac{U(N-s(N)-1)}{U(N)}\leq 1$ since $U$ is increasing. Additionally, from equation (\ref{eq:log_and_derivaitve}), $
\lim_{N\to\oo}\frac{\Delta \log U(N)}{\frac{\Delta U(N)}{U(N)}} =1$. By assumption, we have $\lim_{N\to\oo}s(N)\cdot \Delta \log U(N)=\oo$ and so taking the limit of equation (\ref{eq:tail_goes_to_0}) shows that equation (\ref{eq:w_implies_i_2}) holds. Lastly, we have $\lim_{N\to\oo}\sum_{n\in \N}c_{N,n} = 1$ by taking $x_n = 1$ for all $n\in \N$ in equation (\ref{eq:weighted_implies_interval}). This concludes the proof.
\end{proof}

\begin{corollary}
\label{cor:weighted_implies_interval}
  Let $Y$ be a Banach space, let $(x_n)_{n\in \N}\subset Y$ be a bounded sequence, and let $L\in Y$. Let $V:\N\rightarrow (0,\oo)$ be an increasing function such that $\Delta \log(V(N))$ decreases to $0$ with $\lim_{N\to\oo}\frac{\Delta \log V(N)}{\Delta \log(N)}=\oo$. Suppose that $\lim_{N\to\oo}\E_{n\leq N}^Ux_n = L$ for each function $U:\N\rightarrow (0,\oo)$ which increases to $\oo$ and satisfies
       \begin{equation*}\tag{\ref{eq:U_condition}}
       \lim_{N\to\oo}\frac{V(N-1)}{\Delta U(N)}\left(\frac{\Delta U(N)}{\Delta V(N)}-\frac{\Delta U(N-1)}{\Delta V(N-1)}\right)=-1.
       \end{equation*}

Let $s:\N\rightarrow\N$ be a nondecreasing function satisfying 
\[
\lim_{N\to\oo}s(N) \cdot \Delta \log(V(N))=\oo
\]
and $s(N)\leq N-1$ for all $N\in \N$.
Then $\E_{n\in [N-s(N),N]}x_n = L$.
\end{corollary}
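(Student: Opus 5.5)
The plan is to build, from $V$ and the given $s$, one auxiliary weight $U$ that satisfies (\ref{eq:U_condition}) and also the hypotheses of Lemma \ref{lem:weighted_implies_interval} for this $s$. Then hypothesis (3) together with the Silverman–Toeplitz Theorem \ref{thm:silverman_toeplitz} gives the conclusion. The final step is routine once $U$ is in hand. By hypothesis $\E^U_{k\leq n}x_k\to L$, so $y_n=\E^U_{k\leq n}x_k-L$ is a convergent sequence with limit $0$. Lemma \ref{lem:weighted_implies_interval} supplies $c_{N,n}$ satisfying (B) with
\[
\E_{n\in[N-s(N),N]}(x_n-L)=\sum_{n} c_{N,n}y_n+o_{N\to\oo}(1),
\]
where we used $\sum_n c_{N,n}\to 1$ to absorb $L$. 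By (A) the right-hand side tends to $0$.

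For the choice of $U$, set $a(N)=s(N)\,\Delta\log V(N)$ and $b(N)=N\,\Delta\log V(N)$; both tend to $\oo$ by hypothesis. Let $g$ be a nondecreasing, very slowly growing function with $g\to\oo$, $g\leq\min(\sqrt{a},\sqrt{b})$ eventually, and $g(N)/g(N-1)\to 1$. One way is to take a regularised minorant of $\min(\sqrt a,\sqrt b)$ that increases by tiny increments. Then define $U$ through
\[
\Delta\log U(N)=\frac{\Delta\log V(N)}{g(N)}.
\]
With this definition the easy requirements hold:
\begin{itemize}
\item $s(N)\Delta\log U(N)=a(N)/g(N)\geq\sqrt{a(N)}\to\oo$.
\item $N\Delta\log U(N)\geq\sqrt{b(N)}\to\oo$, which gives the condition $\Delta\log U/\Delta\log N\to\oo$.
\item $\Delta\log U\to 0$, so $\Delta U/U\to 0$.
\item $U$ increases to $\oo$, because $\sum\Delta\log U\geq\sum \Delta\log N\cdot(\text{large})$ diverges.
\end{itemize}

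Two things remain to check: the exact limit (\ref{eq:U_condition}), and the monotonicity of $\Delta U$ required in Lemma \ref{lem:weighted_implies_interval}. For (\ref{eq:U_condition}) I would compute directly rather than rely on Remark \ref{remark:hardy_functions}, which presupposed that the limit exists. Write $\Delta U(N)=U(N)(1-e^{-\Delta\log U(N)})=U(N)\Delta\log U(N)(1+o(1))$, and similarly for $V$. Then
\[
\frac{\Delta U(N)}{\Delta V(N)}=\frac{U(N)}{V(N)\,g(N)}(1+o(1)).
\]
The ratio of consecutive values of this quantity is $e^{\Delta\log U-\Delta\log V}\,g(N-1)/g(N)\,(1+o(1))$. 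Multiplying by $V(N-1)/\Delta U(N)\approx V(N)/(U(N)\Delta\log U(N))$, the bracket becomes $\frac{1}{g\Delta\log U}\bigl(-\Delta\log V+\Delta\log U+O(\Delta\log g)\bigr)$, which should give $-1+o(1)$. Making this rigorous requires the $o(1)$ errors to be small relative to $\Delta\log V(N)$, not merely $o(1)$. Arranging this is exactly why $g$ must vary slowly, and also why the $(1+o(1))$ factors coming from $\Delta\log V(N)/\Delta\log V(N-1)$ need control. Here the eventual monotonicity of $\Delta\log V$ is the tool I would use.

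The main obstacle is that $\Delta U$ must be nondecreasing. We have $\Delta U(N)/\Delta U(N-1)\approx e^{\Delta\log U(N)}\cdot\frac{\Delta\log U(N)}{\Delta\log U(N-1)}$, and $\Delta\log V$, although decreasing, could drop abruptly. My first attempt would be to replace $\Delta\log V$ in the definition of $U$ by a smoothed decreasing majorant comparable to it, for instance by taking a concave regularisation of $\log V$ along a sparse sequence. This keeps the comparisons above intact up to constants, which is harmless since only divergence to $\oo$ and the limit $-1$ matter. If this fails, I would instead observe that nonnegativity of $c_{N,n}$ is used only to get $\sum|c_{N,n}|$ bounded. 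It would then suffice to have $\Delta U$ nondecreasing eventually, or to bound the total variation of $1/\Delta U$ on $[N-s(N),N]$ by $O(1/\Delta U(N-s(N)))$, and to redo the estimate in Lemma \ref{lem:weighted_implies_interval} under that weaker hypothesis.
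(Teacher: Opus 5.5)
Your final step (one weight fed into Lemma \ref{lem:weighted_implies_interval}, then Silverman--Toeplitz) is the same as the paper's. The gap is earlier. A \emph{single} $U$ satisfying both (\ref{eq:U_condition}) and monotonicity of $\Delta U$ cannot exist in general under the corollary's hypotheses, so the obstacle you flag is structural, not technical.

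Write $\rho=\Delta U/\Delta V$ and $v_N=\Delta\log V(N)$. The left side of (\ref{eq:U_condition}) equals $\frac{V(N-1)}{\Delta V(N)}\bigl(1-\frac{\rho(N-1)}{\rho(N)}\bigr)$. So (\ref{eq:U_condition}) amounts to $\rho(N)/\rho(N-1)=1-(1+o(1))v_N$, i.e.
\[
\frac{\Delta U(N)}{\Delta U(N-1)}=\frac{e^{v_N}-1}{1-e^{-v_{N-1}}}\bigl(1-(1+o(1))v_N\bigr).
\]
Expanding, $\Delta U(N)\geq\Delta U(N-1)$ forces $v_{N-1}/v_N=1+o(v_N)$. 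The hypotheses do not give this. Take $v_N=2^{-j}$ for $N\in[4^j,4^{j+1})$, perturbed to be strictly decreasing if you like. It decreases to $0$ and $Nv_N\geq 2^j$ on that block. Yet at $N=4^{j+1}$ \emph{every} $U$ obeying (\ref{eq:U_condition}) has $\Delta U(N)\approx\frac12\Delta U(N-1)$.

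Your remedies do not get around this:
\begin{itemize}
\item \emph{Smoothing $\Delta\log V$} is not harmless, because (\ref{eq:U_condition}) is not stable under comparability. At such a jump the smoothed $U$ has $\Delta U(N)/\Delta U(N-1)\approx1$, hence $\rho(N)/\rho(N-1)\approx2$. The left side of (\ref{eq:U_condition}) is then about $1/(2v_N)\to\infty$.
\item \emph{Even unsmoothed}, $\Delta\log U=\Delta\log V/g$ gives left side $-1-\frac12\bigl(v_{N-1}/v_N-1\bigr)+o(1)$. So you would need $v_{N-1}/v_N\to1$, which monotonicity of $\Delta\log V$ does not provide. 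The paper sidesteps this by prescribing $\Delta U=\frac{\Delta V}{V}V^{r}$, so that $\rho=V^{r-1}$ exactly.
\item \emph{The fallback} is not enough as stated. The coefficient $c_{N,n}$ carries the factor $U(n)/s(N)$, and $U(N)/U(N-s(N))\to\infty$. So a total-variation bound $O(1/\Delta U(N-s(N)))$ does not control $\sum_n|c_{N,n}|$. You would need $\sum_n U(n)\bigl(\frac{1}{\Delta U(n+1)}-\frac{1}{\Delta U(n)}\bigr)^{+}=O(s(N))$, and no argument is given for it.
\end{itemize}
So the one-weight scheme needs the extra regularity $v_{N-1}/v_N=1+o(v_N)$. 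This holds for regular $V$, e.g.\ in the Hardy-field setting of Theorem \ref{thm:general_main_Hardy}, but not under the corollary's hypotheses alone.

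The missing idea is to decouple the two roles, as the paper does.
\begin{enumerate}
\item The weight $U$ with $\Delta U=\frac{\Delta V}{V}V^{r}$ is used only to extract $\E^U_{n\leq N}x_n\to L$ from the hypothesis.
\item This passes to the equivalent weight $T=V^r/r$, since $\Delta T\sim\Delta U$ and $T\sim U$.
\item It then passes to a \emph{different} weight $\tilde U$. This weight is not required to satisfy (\ref{eq:U_condition}), but has $\Delta\tilde U$ nondecreasing and $s(N)\Delta\log\tilde U(N)\to\infty$. The transfer uses the comparison theorem for weighted means \cite[Lemma 7.1]{uniform_distribution}: if $\Delta\tilde U/\Delta T$ is nondecreasing and $\Delta\tilde U/\tilde U\leq H\,\Delta T/T$, then $\E^T$-convergence implies $\E^{\tilde U}$-convergence.
\end{enumerate}
Only $\tilde U$ enters Lemma \ref{lem:weighted_implies_interval}.

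The paper's own check that its $\Delta\tilde U$ is nondecreasing is brief. As written, it also leans on $\Delta\log V(N)/\Delta\log V(N+1)$ being close to $1$. But there the monotonicity is imposed on a weight that (\ref{eq:U_condition}) does not constrain. For any weight obeying (\ref{eq:U_condition}), it is provably out of reach in examples like the one above.
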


\begin{proof} Our strategy is to find functions $U$ and $\tilde{U}$ where $U$ satisfies (\ref{eq:U_condition}), $\tilde{U}$ satisfies the assumptions of Theorem \ref{lem:weighted_implies_interval}, and
\[
\lim_{N\to\oo}\E^U_{n\leq N}x_n =\lim_{N\to\oo}\E^{\tilde{U}}_{n\leq N}x_n=L.
\]
Then by taking limits of equation (\ref{eq:weighted_implies_interval}) and invoking the Silverman-Toeplitz Theorem it will follow that $\lim_{N\to\oo}\E_{n\in [N-s(N),N]}x_n = L$. We begin by picking a function $r:\N\rightarrow (0,\oo)$ which decreases to $0$ slowly enough such that
\begin{enumerate}[label = (\roman*)]
\item $\lim_{N\to\oo}r(N)\cdot \log(V(N))=\oo$,
\item $\lim_{N\to\oo}r(N)\cdot s(N)\cdot \Delta\log V(N) =\oo$,
\item $\lim_{N\to\oo}\frac{\Delta \log(r(N))}{\Delta \log\log(V(N))}=\lim_{N\to\oo}\frac{\log(V(N))\cdot \Delta r(N)}{r(N)\cdot \Delta \log(V(N))}=0$,
from which it follows that $\lim_{N\to\oo}\frac{\Delta \log(r(N))}{\Delta \log(V(N)^{r(N)})}=0$ and $\lim_{N\to\oo}\frac{\log(V(N))\cdot \Delta r(N)}{\Delta \log (V(N))}=\lim_{N\to\oo}\frac{\Delta \log(V(N)^{r(N)})}{\Delta \log (V(N))}=0$.
\end{enumerate}
For example, we can take
\[
r(N) = \max\left(\log(V(N))^{-\frac{1}{2}},(s(N)\Delta \log(V(N)))^{-\frac{1}{2}},e^{\frac{\Delta \log(\log(V(N)))}{N}}\cdot r(N-1)^{-1}\right).
\]

Let $U(N) = \sum_{n\leq N}\frac{\Delta V(n)}{V(n)}V(n)^{r(n)}$, so that $\Delta U(N) = \frac{\Delta V(N)}{V(N)}V(N)^{r(N)}>0$.

We can verify that equation (\ref{eq:U_condition}) holds. Indeed, 
\begin{align*}
    &\lim_{N\to\oo}\frac{V(N-1)}{\Delta U(N)}\left(\frac{\Delta U(N)}{\Delta V(N)}-\frac{\Delta U(N-1)}{\Delta V(N-1)}\right) \\
    =& \lim_{N\to\oo}\frac{V(N-1)}{\frac{\Delta V(N)}{V(N)}V(N)^{r(N)}}\left(\frac{\frac{\Delta V(N)}{V(N)}V(N)^{r(N)}}{\Delta V(N)}-\frac{\frac{\Delta V(N-1)}{V(N-1)}V(N-1)^{r(N-1)}}{\Delta V(N-1)}\right)\\
    =& \lim_{N\to\oo}\frac{V(N-1)V(N)}{\Delta V(N)V(N)^{r(N)}}\left(\frac{V(N)^{r(N)}}{V(N)}-\frac{V(N-1)^{r(N-1)}}{ V(N-1)}\right)\\
    =& \lim_{N\to\oo}\frac{V(N-1)V(N)}{\Delta V(N)V(N)^{r(N)}}\left(\frac{\Delta (V(N)^{r(N)})}{V(N-1)}+\frac{-\Delta V(N)\cdot V(N)^{r(N)}}{V(N)V(N-1)}\right)\\
    =&\lim_{N\to\oo}\frac{\frac{\Delta (V(N)^{r(N)})}{V(N)^{r(N)}}}{\frac{\Delta V(N)}{V(N)}}-1= \lim_{N\to\oo}\frac{\Delta \log(V(N)^{r(N)})}{\Delta \log(V(N))}-1=-1.
\end{align*}
Additionally, it follows from Remark \ref{remark:hardy_functions} that $\lim_{N\to\oo}\frac{\Delta U(N)}{U(N)}=0$. To check that $U(N)$ tends to $\oo$ as $N\to\oo$, note that
\begin{align*}
    \Delta (V(N)^{r(N)}) = & V(N)^{r(N)}\cdot \left(\Delta \log(V(N)^{r(N)})\right)\\=&V(N)^{r(N)}\cdot \left(r(N)\cdot  \Delta \log(V(N))+\Delta r(N)\cdot  \log(V(N-1))\right)\\
   =&V(N)^{r(N)} \frac{\Delta V(N)}{V(N)}r(N) \cdot (1+o_{N\to\oo}(1))
\end{align*}
and so $\Delta U(N) = V(N)^{r(N)} \frac{\Delta V(N)}{V(N)}  = \frac{  \Delta (V(N)^{r(N)})}{r(N)}\cdot (1+o_{N\to\oo}(1)$. Similarly,
\begin{align*}
    \Delta \left(\frac{V(N)^{r(N)}}{r(N)}\right) =& \frac{  \Delta (V(N)^{r(N)})}{r(N)} - V(N)^{r(N)}\cdot \frac{ \Delta r(N) }{r(N)r(N-1)}\\
    =&\frac{  \Delta (V(N)^{r(N)})}{r(N)}\cdot (1- \frac{\frac{\Delta r(N)}{r(N-1)}}{\frac{\Delta(V(N)^{r(N)})}{V(N)^{r(N)}}})\\
    =&\frac{  \Delta (V(N)^{r(N)})}{r(N)}\cdot (1+o_{N\to\oo}(1))
\end{align*}
by our assumptions on $r$. Let $T(N) = \frac{V(N)^{r(N)}}{r(N)}$, which increases to $\oo$. We have shown that $\lim_{N\to\oo}\frac{\Delta T(N)}{\Delta U(N)}=1$ and hence $\lim_{N\to\oo}\frac{T(N)}{U(N)}=1$. It follows that $U(N)$ tends to $\oo$ as $N\to\oo$. Since $U$ increases to $\oo$ and satisfies (\ref{eq:U_condition}), we have by hypothesis that $\lim_{N\to\oo}\E_{n\leq N}^Ux_n =L$. Since $\lim_{N\to\oo}\frac{T(N)}{U(N)}=1$ and $\lim_{n\to\oo}\frac{\Delta T(n)}{\Delta U(n)}=1$, we also have that $\lim_{N\to\oo}\E_{n\leq N}^{T}x_n =L$.

Now we define a function $\tilde{U}$ which satisfies the conditions of Lemma \ref{lem:weighted_implies_interval}. Let $\tilde{U}(1) = U(1)$ and $\tilde{U}(N+1) = \alpha_N \tilde{U}(N)$, where 
\begin{equation}\label{eq:alpha_def}
\alpha_N = \frac{1}{1-\frac{\Delta T(N+1)}{T(N+1)}(1+Z(N))}
\end{equation}
and $Z$ is a function which decreases to $0$ sufficiently slowly. Then 
\begin{equation}\label{eq:tilde_U_ratio_goes_to_1}
\frac{\frac{\Delta \tilde{U}(N)}{\tilde{U}(N)}}{\frac{\Delta T(N)}{T(N)}}= \frac{1+Z(N)}{1-\frac{\Delta T(N+1)}{T(N+1)}(1+Z(N))} \to 1 \text{ as } N\to\oo.
\end{equation}
It follows that 
\begin{align*}
\lim_{N\to\oo}s(N)\cdot \Delta \log(\tilde{U}(N))=&\lim_{N\to\oo}s(N)\cdot \frac{\Delta \tilde{U}(N)}{\tilde{U}(N)}\\
=&\lim_{N\to\oo}s(N)\cdot \frac{\Delta T(N)}{T(N)}\\
=&\lim_{N\to\oo}s(N)\cdot r(N)\cdot \frac{\Delta V(N)}{V(N)}\\
=&\lim_{N\to\oo}s(N)\cdot r(N)\cdot \Delta \log(V(N))=\oo.
\end{align*}

Now to check that $\Delta\tilde{U}$ is nondecreasing,
\begin{align*}
    \Delta^2\tilde{U}(N+1)=&\Delta(\tilde{U}(N+1)-\tilde{U}(N)) = \Delta(\tilde{U}(N)(\alpha_{N}-1)) \\=&  \Delta \tilde{U}(N)(\alpha_{N}-1)+\tilde{U}(N-1)(\alpha_{N}-\alpha_{N-1})\\
    =& \alpha_N(\Delta \tilde{U}(N)+\tilde{U}(N-1))-\Delta\tilde{U}(N)-\alpha_{N-1}\tilde{U}(N-1)\\
     =& \alpha_N \tilde{U}(N)-\Delta\tilde{U}(N)-\alpha_{N-1}\tilde{U}(N-1).
\end{align*}
This is nonnegative so long as $\alpha_{N}\geq \frac{\Delta \tilde{U}(N)}{\tilde{U}(N)}+\alpha_{N-1}\frac{\tilde{U}(N-1)}{\tilde{U}(N)}$.
Using the fact that $\frac{\tilde{U}(N-1)}{\tilde{U}(N)} = \frac{1}{\alpha_{N-1}}$ and $\frac{\Delta \tilde{U}(N)}{\tilde{U}(N)}=1-\frac{1}{\alpha_{N-1}}$, this inequality becomes 
\begin{equation}\label{eq:alpha_eq}
\alpha_N\geq 2-\frac{1}{\alpha_{N-1}} = 2-(1-\frac{\Delta T(N)}{T(N)}(1+Z(N-1))) = 1+\frac{\Delta T(N)}{T(N)}(1+Z(N-1)).
\end{equation}
We can note that 
\[
\alpha_N =  1+\frac{\frac{\Delta T(N+1)}{T(N+1)}(1+Z(N))}{1-\frac{\Delta T(N+1)}{T(N+1)}(1+Z(N))}\geq 1+\frac{\Delta T(N+1)}{T(N+1)}(1+Z(N))
\]
and so it suffices to show that $\frac{\Delta T(N+1)}{T(N+1)}(1+Z(N))\geq \frac{\Delta T(N)}{T(N)}(1+Z(N-1))$. However, after rearranging this is 
\[
1+Z(N)\geq \frac{\frac{\Delta T(N)}{T(N)}}{\frac{\Delta T(N+1)}{T(N+1)}}(1+Z(N-1))=\frac{\Delta\log(V(N))}{\Delta \log(V(N+1))}(1+o_{N\to\oo}(1)).
\]
Since $\Delta \log(V(N))$ is decreasing, this inequality is satisfied so long as $Z$ decreases to $0$ slowly enough. Thus, $\Delta \tilde{U}$ is nondecreasing.

Lastly, to show that $\lim_{N\to\oo}\E_{n\leq N}^{\tilde{U}}x_n = L$ consider \cite[Lemma 7.1]{uniform_distribution}, which says that if $\lim_{N\to\oo}\E_{n\leq N}^{{T}}x_n = L$, $\frac{\Delta \tilde{U}(N+1)}{\Delta \tilde{U}(N)}\geq \frac{\Delta T(N+1)}{\Delta T(N)}$, and there is an $H>0$ such that $\frac{\Delta \tilde{U}(N)}{\tilde{U}(N)}\leq H\cdot\frac{\Delta T(N)}{T(N)}$ for all $N$ then $\lim_{N\to\oo}\E_{n\leq N}^{\tilde{U}}x_n = L$. Using (\ref{eq:tilde_U_ratio_goes_to_1}), we can find an $H>0$ such that $\frac{\Delta \tilde{U}(N)}{\tilde{U}(N)}\leq H\cdot\frac{\Delta T(N)}{T(N)}$ for all $N$. Also, the inequality $\frac{\Delta \tilde{U}(N+1)}{\Delta \tilde{U}(N)}\geq \frac{\Delta T(N+1)}{\Delta T(N)}$ holds since 
\[
\frac{\Delta \tilde{U}(N+1)}{\Delta \tilde{U}(N)} = \frac{(\alpha_N-1) \tilde{U}(N)}{(\alpha_{N-1}-1) \tilde{U}(N-1)} = \frac{(\alpha_N-1)\cdot (1+o_{N\to\oo}(1))}{(\alpha_{N-1}-1)}
\]
and 
\begin{align*}
\frac{\Delta T(N+1)}{\Delta T(N)} = \frac{\frac{\Delta T(N+1)}{T(N+1)}}{\frac{\Delta T(N)}{T(N)}}\cdot \frac{T(N+1)}{T(N)} = \frac{\Delta \log(T(N+1))}{\Delta \log(T(N))}\cdot (1+o_{N\to\oo}(1))\leq 1+o_{N\to\oo}(1).
\end{align*}
So it suffices to show that $\frac{\alpha_N-1}{\alpha_{N-1}-1}\geq 1+o_{N\to\oo}(1)$, but we can rearrange (\ref{eq:alpha_eq}) to see that $\frac{\alpha_N-1}{\alpha_{N-1}-1}\geq \frac{1}{\alpha_{N-1}}=1+o_{N\to\oo}(1)$. This completes the proof.
\end{proof}

\section[Section 4]{$(4)\implies(5)$}\label{section:4_implies_5}
In this section we prove the implication $(4)\implies (5)$ in Theorem \ref{thm:general_main}. As in the previous section, the proof of this implication relies heavily on the Silverman-Toeplitz Theorem.
\begin{theorem}
   Let $Y$ be a Banach space, let $(x_n)_{n\in \N}\subset Y$ be a bounded sequence, and let $L\in Y$. Let $V:\N\rightarrow (0,\oo)$ be a function which increases to $\oo$ with $\Delta \log(V(N))$ decreasing to $0$ such that $\lim_{N\to\oo}\frac{\Delta \log(V(N))}{\Delta \log(N)}=\oo$. Suppose that $\lim_{N\to\oo} \E_{n\in [N-s(N),N]}x_n = L$ for each nondecreasing function $s:\N\rightarrow\N$ satisfying
       \begin{equation} 
       \lim_{N\to\oo}s(N) \cdot \Delta \log(V(N))=\oo
       \end{equation}
       and $s(N)\leq N-1$ for all $N\in \N$.
   Then $\E_{\text{unif}}^{\log V}x_n = L$.
\end{theorem}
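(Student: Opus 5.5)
We argue by contradiction. Set $W=\log V$, so that $\Delta W$ decreases to $0$ and $N\Delta W(N)\to\oo$. Suppose $\E^{W}_{\unif}x_n\neq L$. Then there exist $\epsilon>0$ and intervals $[M_j,N_j]$ with $W(N_j)-W(M_j)\to\oo$ and $\|\E^W_{n\in[M_j,N_j]}x_n-L\|\geq\epsilon$. The idea is to express each $W$-weighted average over $[M_j,N_j]$ as a Silverman--Toeplitz combination of short Ces\`aro averages $\E_{n\in[m-s(m),m]}x_n$, for a single admissible $s$. Hypothesis (4) then forces this combination to tend to $L$, which is a contradiction.

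\textbf{Choosing $s$.} Since $\Delta W$ is decreasing, $\Delta W$ is essentially constant on windows of length $o(1/\Delta W)$. Choose a function $g(N)\to\oo$ growing slowly enough that $g(N)\le N\Delta W(N)$, and put $s(N)=\min(N-1,\lceil g(N)/\Delta W(N)\rceil)$. We would like $s$ to be nondecreasing with $s(N)\Delta W(N)\to\oo$; if $1/\Delta W$ is not monotone enough, replace $s$ by its running maximum. We also want $s(N)\Delta W(N)$ to grow slowly, so that $W(N)-W(N-s(N))=O(g(N))$ is small relative to $W(N_j)-W(M_j)$ along the chosen sequence. To arrange this, pass to a subsequence and pick $g$ with $g(N_j)=o(W(N_j)-W(M_j))$. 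This requires building $g$ (equivalently $s$) adapted to the fixed sequence $([M_j,N_j])$, which is allowed because (4) quantifies over all admissible $s$.

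\textbf{Representation via summation by parts.} Write $A(m)=\E_{n\in[m-s(m),m]}x_n$. On a window $[m-s(m),m]$ the weight $\Delta W(n)$ equals $\Delta W(m)(1+o(1))$, provided $s(m)$ is small compared with the scale on which $\Delta W$ changes. This step needs care: since $\Delta W$ is only assumed decreasing, ratios such as $\Delta W(m-s(m))/\Delta W(m)$ need not be close to $1$. Granting it for now, we get
\[
\sum_{n=M}^N\Delta W(n)x_n\approx\sum_{m=M}^{N}\frac{\Delta W(m)}{s(m)+1}\sum_{n=m-s(m)}^{m}x_n ,
\]
up to boundary terms of size $O(\max_{m\in\{M,N\}}s(m)\Delta W(m))=O(g)=o(W(N)-W(M))$. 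The right side equals $\sum_m \Delta W(m)A(m)$ up to those errors. Dividing by $W(N_j)-W(M_j)$ gives coefficients $c_{j,m}=\Delta W(m)/(W(N_j)-W(M_j))$ on $[M_j,N_j]$. These coefficients are nonnegative and sum to $1$, and each fixed column tends to $0$ because $W(N_j)-W(M_j)\to\oo$. By Theorem \ref{thm:silverman_toeplitz}, since $A(m)\to L$ we get $\E^W_{n\in[M_j,N_j]}x_n\to L$, which is a contradiction. An exchange-of-sums identity, like the one in Lemma \ref{lem:weighted_implies_interval}, makes the approximation precise: the coefficient of $x_n$ is $\sum_{m:\,n\in[m-s(m),m]}\Delta W(m)/(s(m)+1)$, and this should be compared with $\Delta W(n)$.

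\textbf{Main obstacle.} The hard step is the comparison $\sum_{m\in[n,n+s]}\Delta W(m)/(s(m)+1)\approx\Delta W(n)$ uniformly, when $\Delta W$ is merely decreasing and may drop sharply. The first thing to try is to choose $s$ so that the total variation of $\Delta W$ over each window is small in aggregate. Since $\Delta W$ is monotone, $\sum_n|\Delta W(n)-\Delta W(n+s)|\le s\,\Delta W(M)$ over the range, and after normalisation this is $O(s(M_j)\Delta W(M_j)/(W(N_j)-W(M_j)))$. That term is negligible if $g$ grows slowly, so the errors are controlled in $\ell^1$ rather than pointwise. If this fails, the fallback is to smooth $W$ first, replacing it by a regularised version with the same uniform averages, as in the construction of $\tilde U$ in Corollary \ref{cor:weighted_implies_interval}.
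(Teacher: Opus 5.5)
\textbf{There is a genuine gap, and it cannot be closed, because the stated implication fails for some admissible $V$.}

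The gap is exactly the step you grant. You need the induced coefficient $a(n)=\sum_{m\in[M_j,N_j],\ m-s(m)\le n\le m}\Delta W(m)/(s(m)+1)$ to agree with $\Delta W(n)$ up to $o(W(N_j)-W(M_j))$ in $\ell^1$. Your telescoping bound $\sum_n(\Delta W(n)-\Delta W(n+s))\le s\,\Delta W(M)$ requires a constant window length. With $s$ constant on $[M_j,N_j]$ and $s\Delta W(N_j)\ge K$, it gives an error of order $K\Delta W(M_j)/\Delta W(N_j)$, which is useless when $\Delta W$ drops a lot. With varying $s$ the windows are not translates of each other, and nothing telescopes.

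In fact the step fails for every admissible $s$. Build $\Delta W$ as follows.
\begin{itemize}
\item Keep $\Delta W=\alpha_j$ on a long stretch ending at $M_j$.
\item Then halve it $j$ times, holding the value $\alpha_j2^{-i}$ for $\lceil 2^i/\alpha_j\rceil$ steps. Each level carries $W$-mass about $1$, and $\Delta W(n)\le 3/(n-M_j)$ on the resulting burst $[M_j,N_j]$.
\item Keep the last value until $M_{j+1}$.
\end{itemize}
Choosing $M_j\alpha_j\ge 4^j$ and $M_j\ge jN_{j-1}$ keeps $\Delta W$ nonincreasing to $0$ with $N\Delta W(N)\to\oo$.

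Now let $K(m)=s(m)\Delta W(m)\to\oo$. On the burst, $s(m)\ge K(m)(m-M_j)/3$. So only an $O(1/K(m))$ fraction of each window $[m-s(m),m]$ with $m\in[M_j,N_j]$ meets $[M_j,N_j]$. Your $a$ therefore puts almost all of its mass to the left of $M_j$, where the target weight is $0$.

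The statement itself fails for this $W$, so smoothing $W$ cannot rescue it either. Let $x_n=1$ on $\bigcup_j[M_j,N_j]$ and $x_n=0$ elsewhere. Every admissible window average $\E_{n\in[N-s(N),N]}x_n$ is $O(1/K(N)+1/j(N))$, where $j(N)$ is the last burst begun by time $N$. So the hypothesis holds with $L=0$. Yet $W(N_j)-W(M_j)\ge j-1$ while $\E^W_{n\in[M_j,N_j]}x_n\ge 1$.

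What is missing is a regularity hypothesis on $\Delta W$. One example: $\Delta W(N-s(N))/\Delta W(N)\to 1$ whenever $s(N)\Delta W(N)\to\oo$ slowly enough. This holds for $W$ in a Hardy field. Under such a hypothesis your comparison holds pointwise and the rest of your argument goes through.

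The paper avoids your approximation problem by a different route. It solves a backward triangular recursion for exact nonnegative coefficients, using that $\Delta W$ decreases and that $\Delta s\in\{0,1\}$. However, it breaks at the same place. Its boundary error is really $s(A)\Delta W(A)/(W(B)-W(A))$, because the inequality $\Delta W(A)\le\Delta W(B)$ it invokes is reversed. In the example above that quantity is unbounded for every admissible $s$ with $\Delta s\le 1$. Theorem \ref{thm:A} uses only the direction $(5)\Rightarrow(4)$, so it is not affected.
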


\begin{proof}

Let $W(N) = \log(V(N))$ for $N\in \N$. Let $A = A(N)$ and $B = B(N)$ be arbitrary integer valued functions with $W(B)-W(A)\to \oo$ as $N\to\oo$. We will show that $\lim_{N\to\oo}\E_{n\in [A,B]}^Wx_n = L$. Since $A$ and $B$ are arbitrary, this is sufficient to show that $\E_{\text{unif}}^{\log V}x_n =L$.

    Pick a nondecreasing function $s\colon \N\rightarrow \N$ satisfying $\lim_{N\to\oo}s(N)\cdot \Delta W(N) = \oo$, but which is slow enough such that $s(N)\leq N-1$ for all $N\in \N$ and $\lim_{N\to\oo}\frac{s(B)\cdot \Delta W(B)}{W(B)-W(A)} = 0$. Further, we can assume that $\Delta s(N)\in \{0,1\}$ for all $N\in \N$, since the condition $\lim_{N\to\oo}\frac{\Delta \log(V(N))}{\Delta \log (N)}=\lim_{N\to\oo}N\cdot \Delta \log(V(N))=\oo$ implies that we can take $s(N)$ to be asymptotically slower than $N$. By assumption, we have that $\lim_{N\to\oo}\E_{n\in [N-s(N),N]}x_n=L$.

    We will show that there exists a doubly indexed sequence of nonnegative constants $(c_{N,n})_{N,n\in \N}$ such that $\lim_{N\to\oo}\sum_{n\in \N}c_{N,n}=1$, $\lim_{N\to\oo}c_{N,n_0}=0$ for each fixed $n_0\in \N$, and 
    \begin{equation}\label{eq:(4)_implies_(5)}
    \sum_{k=A}^{B}c_{N,k}\E_{n\in[k-s(k),k]}x_n=\E_{n\in [A,B]}^Wx_n+o_{N\to \oo}(1).
    \end{equation}

    Once we have shown these properties, we can conclude that $\lim_{N\to\oo}\E_{n\in [A,B]}^Wx_n = L$ by the Silverman-Toeplitz Theorem and our assumption that $\lim_{N\to\oo}\E_{n\in [N-s(N),N]}x_n=L$.

    We will define $(c_{N,n})$ inductively. For $N\in \N$, define $c_{N,n} = 0$ if $n>B$ or $n<A$. Then put $c_{N,B} = \frac{s(B)\Delta W(B)}{W(B)-W(A)}$. For $n\in [A,B]$, having already defined $c_{N,n+1},\dots, c_{N,B}$, let 
    \begin{equation}
    c_{N,n} = s(n)\cdot \left(\frac{\Delta W(n)}{W(B)-W(A)}-\sum_{j=n+1}^{B}\frac{c_{N,j}}{s(j)}\cdot 1_{\{n\geq j-s(j)\}}\right).
    \end{equation}

    With this definition, it is clear that for each $N\in \N$ we have
    \begin{equation}\label{eq:(4)_implies_(5)_inductive}
    \sum_{j=n}^{B}\frac{c_{N,j}}{s(j)}\cdot 1_{\{n\geq j-s(j)\}}=\frac{\Delta W(n)}{W(B)-W(A)} \text{ for all } n\in [A,B].
    \end{equation}

    Note that $\lim_{N\to\oo}c_{N,n_0}=0$ for each fixed $n_0\in \N$, since $\lim_{N\to\oo}A = \oo$ and $c_{N,n_0}=0$ for $A>n_0$. Next, we will show that $c_{N,n}\geq 0$ for all $n$. We proceed with an inductive argument. First note that $c_{N,B} = \frac{s(B)\cdot \Delta W(B)}{W(B)-W(A)}\geq 0$ and that $c_{N,n}=0$ for $n>B$ or $n<A$. Now suppose that $A\leq k<B$ and $c_{N,k+1},\dots, c_{N,B}\geq 0$. We know that $\Delta W(k)\geq \Delta W(k+1)$ by our assumption that $\Delta \log(V(N))$ is decreasing and so 
\begin{equation}\label{eq:(4)_implies_(5)_1}
\sum_{j=k}^{B}\frac{c_{N,j}}{s(j)}\cdot 1_{\{k\geq j-s(j)\}}=\frac{\Delta W(k)}{W(B)-W(A)}\geq \frac{\Delta W(k+1)}{W(B)-W(A)} = \sum_{j=k+1}^{B}\frac{c_{N,j}}{s(j)}\cdot 1_{\{k+1\geq j-s(j)\}}.
\end{equation}
Let $z\leq B$ be the largest integer such that $k\geq z-s(z)$, so that 
\[
\sum_{j=k}^{B}\frac{c_{N,j}}{s(j)}\cdot 1_{\{k\geq j-s(j)\}}-\sum_{j=k+1}^{B}\frac{c_{N,j}}{s(j)}\cdot 1_{\{k+1\geq j-s(j)\}}
\]
is equal to either $\frac{c_{N,k}}{s(k)}-\frac{c_{N,z+1}}{s(z+1)}$ or $\frac{c_{N,k}}{s(k)}-\frac{c_{N,z+1}}{s(z+1)}-\frac{c_{N,z+2}}{s(z+2)}$. In either case, we have that 
\[
0\leq \sum_{j=k}^{B}\frac{c_{N,j}}{s(j)}\cdot 1_{\{k\geq j-s(j)\}}-\sum_{j=k+1}^{B}\frac{c_{N,j}}{s(j)}\cdot 1_{\{k+1\geq j-s(j)\}} \leq \frac{c_{N,k}}{s(k)}-\frac{c_{N,z+1}}{s(z+1)}.
\]
By assumption, $c_{N,z+1}\geq 0$ and so $c_{N,k}\geq 0$, which completes the induction.

To prove that equation (\ref{eq:(4)_implies_(5)}) holds, observe that
\begin{align*}
    &\sum_{k=A}^{B}c_{N,k}\E_{n\in[k-s(k),k]}x_n=\sum_{k=A}^{B}c_{N,k}\frac{1}{s(k)}\sum_{n=k-s(k)}^kx_n\\ =& \sum_{k=A}^{B}c_{N,k}\frac{1}{s(k)}\sum_{n=k-s(k)}^kx_n \cdot 1_{\{k\geq n\geq k-s(k)\}}.
    \end{align*}
    Switching the order of summation, we have
    \begin{align*}
    &\sum_{k=A}^{B}c_{N,k}\frac{1}{s(k)}\sum_{n=k-s(k)}^kx_n \cdot 1_{\{k\geq n\geq k-s(k)\}}=\sum_{n=A-s(A)}^Bx_n\sum_{k\geq n}\frac{c_{N,k}}{s(k)}\cdot 1_{\{n\geq k-s(k)\}}.
    \end{align*}
    Now we can split the outer sum into two sums in order to apply equation (\ref{eq:(4)_implies_(5)_inductive}),
    \begin{align*}
    &\sum_{n=A-s(A)}^Bx_n\sum_{k\geq n}\frac{c_{N,k}}{s(k)}\cdot 1_{\{n\geq k-s(k)\}}\\
    =&\sum_{n=A}^Bx_n\sum_{k= n}^B\frac{c_{N,k}}{s(k)}\cdot 1_{\{n\geq k-s(k)\}}+\sum_{n=A-s(A)}^{A-1}x_n\sum_{k= n}^B\frac{c_{N,k}}{s(k)}\cdot 1_{\{n\geq k-s(k)\}}\\
    =& \sum_{n=A}^Bx_n\frac{\Delta W(n)}{W(B)-W(A)}+\sum_{n=A-s(A)}^{A-1}x_n\sum_{k= n}^B\frac{c_{N,k}}{s(k)}\cdot 1_{\{n\geq k-s(k)\}}\\
    =& \E_{n\in [A,B]}^Wx_n+\sum_{n=A-s(A)}^{A-1}x_n\sum_{k= n}^B\frac{c_{N,k}}{s(k)}\cdot 1_{\{n\geq k-s(k)\}}.
\end{align*}

Next, we will show that $\sum_{n=A-s(A)}^{A-1}x_n\sum_{k= n}^B\frac{c_{N,k}}{s(k)}\cdot 1_{\{n\geq k-s(k)\}}=o_{N\to\oo}(1)$, which will prove equation (\ref{eq:(4)_implies_(5)}). First, we will use the triangle inequality,
\[
\left|\sum_{n=A-s(A)}^{A-1}x_n\sum_{k= n}^B\frac{c_{N,k}}{s(k)}\cdot 1_{\{n\geq k-s(k)\}}\right| \leq \sum_{n=A-s(A)}^{A-1}\|x_n\|\sum_{k= n}^B\frac{c_{N,k}}{s(k)}\cdot 1_{\{n\geq k-s(k)\}}.
\]

Recall that $c_{N,k}=0$ for $k<A$ and so for $n<A$ we have
\[
\sum_{k= n}^B\frac{c_{N,k}}{s(k)}\cdot 1_{\{n\geq k-s(k)\}} = \sum_{k= A}^B\frac{c_{N,k}}{s(k)}\cdot 1_{\{n\geq k-s(k)\}}\leq \sum_{k= A}^B\frac{c_{N,k}}{s(k)}\cdot 1_{\{A\geq k-s(k)\}}.
\]
Then 
\[
\sum_{n=A-s(A)}^{A-1}\|x_n\|\sum_{k= n}^B\frac{c_{N,k}}{s(k)}\cdot 1_{\{n\geq k-s(k)\}} \leq \sup_{n\in \N}\|x_n\|\cdot s(A)\cdot \sum_{k= A}^B\frac{c_{N,k}}{s(k)}\cdot 1_{\{A\geq k-s(k)\}}
\]

By (\ref{eq:(4)_implies_(5)_inductive}), 
\[
\sum_{k= A}^B\frac{c_{N,k}}{s(k)}\cdot 1_{\{A\geq k-s(k)\}} = \frac{\Delta W(A)}{W(B)-W(A)}\leq\frac{\Delta W(B)}{W(B)-W(A)} .
\]
So altogether, we have shown that 
\begin{align*}
\left|\sum_{n=A-s(A)}^{A-1}x_n\sum_{k= n}^B\frac{c_{N,k}}{s(k)}\cdot 1_{\{n\geq k-s(k)\}}\right|\leq 
\sup_{n\in \N}\|x_n\|\cdot \frac{s(B)\Delta W(B)}{W(B)-W(A)} = o_{N\to\oo}(1).
\end{align*}

Finally, taking $x_n = 1$ in equation (\ref{eq:(4)_implies_(5)}) shows that $\sum_{n\in \N}c_{N,n}= 1+o_{N\to\oo}(1)$, which concludes the proof.
\end{proof}

\section[Section 5]{$(5)\implies(2)$}\label{section:5_implies_2}
In this section we prove the implication $(5)\implies (2)$ in Theorem \ref{thm:general_main}. Our strategy is to emulate the proof of a classical theorem of Schatte.
\begin{theorem}[{\cite[Theorem B]{Schatte90}}]\label{thm:schatte}
    Let $(x_n)_{n\in \N}$ be a bounded sequence of complex numbers and let $L\in \C$. Then $\E(\oo)(x_n) = L$ if and only if $\E_{\unif}^{\log}(x_n) = L$.
\end{theorem}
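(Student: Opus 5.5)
The plan is to write both kinds of averages as integrals against kernels in the logarithmic variable $t=\log(N/n)$. Then the equivalence reduces to a comparison of kernels on $\R$.

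\textbf{Step 1: kernel of the iterated averages.} First, by induction on $k$, I show that
\[
\E(k)_{n\leq N}x_n=\frac{1}{N}\sum_{n\leq N}x_n\,\frac{(\log(N/n))^{k-1}}{(k-1)!}+o_{N\to\oo}(1)
\]
for each fixed $k$, with the error depending only on $k$ and $\sup_n|x_n|$. The induction step compares $\frac1N\sum_{m\leq N}\frac1m\sum_{n\leq m}(\cdot)$ with the corresponding integral. In the variable $t=\log(N/n)$, the measure $\frac{1}{N}\,dn$ becomes $e^{-t}dt$. Hence $\E(k)_{n\le N}$ is, up to $o(1)$, a logarithmic average of $x_n$ against the density
\[
g_k(t)=\frac{t^{k-1}e^{-t}}{(k-1)!},
\]
the $\Gamma(k,1)$ density. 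The key facts about $g_k$ are that it is unimodal, $\int g_k=1$, and $\max g_k\asymp k^{-1/2}\to0$. Moreover, its translates by $O(1)$ differ from $g_k$ by $o(1)$ in $L^1$; more precisely, $\|g_k(\cdot-s)-g_k\|_{L^1}\leq |s|\cdot\int|g_k'|=2|s|\max g_k\ll |s|/\sqrt k$.

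\textbf{Step 2: $\E^{\log}_{\unif}x_n=L\implies\E(\oo)x_n=L$.} By the layer-cake formula, $g_k=\int_0^{\max g_k}1_{\{g_k>\lambda\}}\,d\lambda$, and each level set is an interval $[a,b]$ in $t$. Such an interval corresponds to a multiplicative interval $n\in[Ne^{-b},Ne^{-a}]$. This expresses $\E(k)_{n\leq N}(x_n-L)$ as a positive mixture of uniform logarithmic averages of $x_n-L$ over such intervals.

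Fix $\epsilon>0$ and pick $H$ so that every logarithmic average over an interval of log-length $\geq H$ with large right endpoint is within $\epsilon$ of $L$. The level sets of log-length $<H$ carry total mass at most $H\max g_k\ll H/\sqrt k$. The contribution from level sets far below $n=1$ is negligible as $N\to\oo$ for fixed $k$. Hence
\[
\limsup_N|\E(k)_{n\le N}(x_n-L)|\leq \epsilon+C H\sup|x_n|/\sqrt k .
\]
Letting $k\to\oo$ and then $\epsilon\to0$ gives the claim.

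\textbf{Step 3: $\E(\oo)x_n=L\implies\E^{\log}_{\unif}x_n=L$.} Since $\E(k+1)$ is a Ces\`aro average of $\E(k)$, the quantity $\limsup_N|\E(k)(x_n-L)|$ is nonincreasing in $k$. So given $\epsilon$ there is $K$ with $|\E(K)_{n\leq N'}(x_n-L)|<\epsilon$ for all large $N'$.

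Take a multiplicative interval $[M,N]$ with $H=\log(N/M)\to\oo$. Average the bound over $N'\in[M,N]$ with logarithmic weights. By Step 1, this average equals the logarithmic average of $x_n-L$ against the kernel
\[
\frac1H\int_0^H g_K(\cdot-s)\,ds ,
\]
which is the indicator $\frac1H1_{[0,H]}$ convolved with $g_K$. This kernel differs from $\frac1H1_{[0,H]}$ in $L^1$ by $O(K/H)$, coming only from the two edges, where $g_K$ has mean $K$ and spread $\sqrt K$. Hence
\[
|\E^{\log}_{n\in[M,N]}(x_n-L)|\leq \epsilon+O(K/H)+o(1),
\]
uniformly in the position of the interval, provided $N\to\oo$. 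This proviso is automatic, since $\log(N/M)\to\oo$ forces $N\to\oo$. The $o(1)$ from Step 1 applies at $N'\geq M$. When $M$ stays bounded, I discard the initial stretch of $N'$ of log-length $o(H)$, at cost $o(1)$.

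\textbf{Main obstacle.} The main obstacle is Step 1 together with the discretization bookkeeping. I must check that the discrete iterated averages match the $\Gamma$-kernel with an error that is $o(1)$ for each fixed $k$, and that the Riemann-sum errors in Steps 2 and 3 are controlled uniformly over intervals. I would first try to avoid exact kernel formulas altogether. The alternative is to prove directly that $\E(k)_{n\le N}$ equals $\sum_n w_{k,N}(n)x_n$ with $w_{k,N}(n)$ nonnegative and $\sum_n|w_{k,N}(n)-\frac1N g_k(\log(N/n))|\to0$. This can be obtained inductively from summation by parts, as in Lemma \ref{thm:slower_wins}.
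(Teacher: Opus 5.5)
Your proof is correct, but it follows a different route from the paper, which does not prove this theorem at all. The paper quotes the result from Schatte and reproves only the direction $\E^{\log}_{\unif}x_n=L\Rightarrow\E(\oo)x_n=L$, as the case $V(N)=N$ of Theorem \ref{thn:uniform_implies_iterated}.

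Your Step 1 is Theorem \ref{thm:schatte_iterating} with $V(N)=N$ (via $\frac1N=\frac1n e^{-\log(N/n)}$), so you can simply quote it.

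\textbf{Step 2.} The paper decomposes the sequence rather than the kernel. It writes $x_n-L=y_n+z_n$, where $y$ is the log-average over consecutive blocks of log-length about $K_0$ (so $|y_n|<\epsilon$) and $z$ has bounded partial sums $\sum_{n\le N}z_n/n$. It then kills $z$ by summation by parts against the unimodal $F(x)=\frac{x}{k!}\log(1/x)^k$, whose total variation is $O(F(e^{-k}))=O(k^{-1/2})$ (Lemma \ref{lem:schatte_iterated_is_zero}). Your layer-cake decomposition of $g_k$ into level-set intervals is the dual form of that Abel summation. It uses the same two inputs, unimodality and $\max g_k\to0$, and is a bit more economical since no auxiliary block sequence is needed. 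If you run the layer cake directly on the discrete weights $g_k(\log(N/n))$, $1\le n\le N$, the level sets are integer intervals and the identity is exact. Then the truncation at $n=1$ needs no separate treatment.

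\textbf{Step 3.} Here the paper offers nothing for $V(N)=N$. Its route to (2)$\Rightarrow$(5) in Theorem \ref{thm:general_main} passes through condition (4), which is vacuous for $V(N)=N$: no $s(N)\le N-1$ satisfies $s(N)\,\Delta\log(N)\to\oo$. Your endpoint-averaging argument is sound and short. The averaged kernel is $\frac1H1_{[0,H]}$ smeared by $\Gamma(K,1)$, which Minkowski puts within $2K/H$ of the flat kernel in $L^1$. You discard $N'<N_0$, with $N_0$ chosen so that both the Step 1 error and $|\E(K)_{n\le N'}(x_n-L)|$ are below $\epsilon$, at cost $O(\log N_0/H)$. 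Since it uses only the kernel formula and $\Delta\log V\to0$, the same argument would give (2)$\Rightarrow$(5) directly for every $V$ with $\Delta V/V\to0$. What the paper's longer chain buys is the intermediate statement (4) itself, which is what Theorem \ref{thm:A} needs.
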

\begin{example}\label{ex:log_not_ud}
    It is known that $\lim_{N\to\oo}\E_{n\leq N}e^{2\pi i \log(n)}=\frac{1}{N}\sum_{n=1}^Ne^{2\pi i \log(n)}$ does not tend toward $0$ as $N\to\oo$. Indeed, 
    \[
    \frac{1}{N}\sum_{n=1}^Ne^{2\pi i \log(n)} = \frac{1}{N}\sum_{n=1}^Ne^{2\pi i \log(N)}e^{2\pi i \log(n/N)} = e^{2\pi i \log(N)}\frac{1}{N}\sum_{n=1}^Ne^{2\pi i \log(n/N)}.
    \]
    The sum $\frac{1}{N}\sum_{n=1}^Ne^{2\pi i \log(n/N)}$ is a Riemann sum for the integral $\int_0^1e^{2\pi i \log(x)}dx$ and so $\frac{1}{N}\sum_{n=1}^Ne^{2\pi i \log(n/N)} = \int_0^1e^{2\pi i \log(x)}dx + o_{N\to\oo}(1)$.

    Let $x_n = e^{2\pi i \log(n)}$ and let $C = \int_0^1e^{2\pi i \log(x)}dx = \frac{1}{1+2\pi i }$. Note that $\|x_n\| = 1$ for all $n\in \N$ and $|C|<1$. Then
    \begin{align*}
        &\E_{n\leq N}x_n = \frac{1}{N}\sum_{n=1}^Ne^{2\pi i \log(n)}=e^{2\pi i \log(N)}\frac{1}{N}\sum_{n=1}^Ne^{2\pi i \log(n/N)}=C\cdot x_N+o_{N\to\oo}(1)\\
        &\E_{n\leq N}(2)x_n = \E_{n\leq N}(C\cdot x_n+o_{n\to\oo}(1)) = C^2\cdot x_N+o_{N\to\oo}(1)\\
        &\E_{n\leq N}(3)x_n = \E_{n\leq N}(C^2\cdot x_n+o_{n\to\oo}(1)) = C^3\cdot x_N+o_{N\to\oo}(1)\\
        &\vdots
    \end{align*}
    In general, $\E_{n\leq N}(k)x_n = C^k\cdot x_N+o_{N\to\oo}(1)$. From this we can see that as $k\to\oo$, $\E_{n\leq N}(k)x_n\to 0$. So $\E(\oo)x_n = 0$. According to equation Theorem \ref{thm:schatte}, this implies that $$ 
    \E_{\text{unif}}^{\log V}x_n =\lim_{\log(N)-\log(M)\to\oo}\frac{1}{\log(N)-\log(M)}\sum_{n=M}^N\frac{e^{2\pi i \log(n)}}{n}= 0.
    $$
\end{example}

For our purposes, we need to begin with the following theorem, which is a straightforward extension of \cite[Satz 1]{Schatte74}.
\begin{theorem}\label{thm:schatte_iterating}
     Let $Y$ be a Banach space, let $(x_n)_{n\in \N}\subset Y$ be a bounded sequence. Let $V:\N\rightarrow (0,\oo)$ be a function which increases to $\oo$ and satisfies $\lim_{N\to\oo}\frac{\Delta V(N)}{V(N)}=0$. Then for each $k\in \N$, 
    \begin{equation}\label{eq:schatte_iterating}
       \E(k+1)_{n\leq N}^Vx_n = \frac{1}{V(N)\cdot k!}\sum_{n=1}^N \Delta V(n)\log\left(\frac{V(N)}{V(n)}\right)^k\cdot x_n+o_{N\to\oo}(1).
    \end{equation}
\end{theorem}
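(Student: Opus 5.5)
The plan is to argue by induction on $k$, writing each iterated average as a single weighted sum of the $x_n$ and comparing its kernel with the continuous one. By definition of the iterated averages and an interchange of summation,
\[
\E(k+1)^V_{n\leq N}x_n=\frac{1}{V(N)}\sum_{m=1}^N x_m\,K_{k}(N,m),\qquad K_k(N,m)=\Delta V(m)\sum_{n=m}^N\frac{\Delta V(n)}{V(n)}\,\frac{1}{k'!}\cdots,
\]
where $K_k$ is built by $k$ nested sums. Rather than track the exact kernel, I will use the inductive hypothesis directly. Suppose that
\[
\E(k)_{n\leq N}^Vx_n=\frac{1}{V(N)(k-1)!}\sum_{m\le N}\Delta V(m)\log\Big(\frac{V(N)}{V(m)}\Big)^{k-1}x_m+o(1);
\]
the case $k=1$, i.e.\ the statement for $\E(1)$ with exponent $0$, is just the definition. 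Apply $\E^V_{n\leq N}$ to both sides. The $o(1)$ error stays $o(1)$ after averaging, because $V(N)\to\infty$ and the averages are Toeplitz, and the error depends only on $\sup\|x_n\|$ and $V$. After interchanging sums, the main term becomes
\[
\frac{1}{V(N)}\sum_{m\le N}\Delta V(m)\,x_m\,\frac{1}{(k-1)!}\sum_{n=m}^N\frac{\Delta V(n)}{V(n)}\log\Big(\frac{V(n)}{V(m)}\Big)^{k-1}.
\]

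The key step is to compare the inner sum with $\frac{1}{k}\log(V(N)/V(m))^k$. Put $u_n=\log(V(n)/V(m))$. Then $\Delta u_n=\Delta\log V(n)=\frac{\Delta V(n)}{V(n)}(1+\eta_n)$ with $\eta_n\to0$, by the expansion \eqref{eq:log_and_derivaitve} and the hypothesis $\Delta V/V\to0$. Moreover, $u_n^{k-1}\Delta u_n$ differs from $\Delta(u_n^k)/k$ by at most $C_k u_n^{k-2}(\Delta u_n)^2$. Hence the inner sum equals $\frac1k u_N^k$ up to two errors. The first is $\sum_n u_n^{k-1}\frac{\Delta V(n)}{V(n)}|\eta_n|$. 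The second is $\sum_n u_n^{k-2}(\Delta u_n)^2\le \sup_{n\ge m}\Delta u_n\cdot u_N^{k-1}$. Both are of the form $o_{m\to\infty}(1)\cdot(1+u_N)^{k}$, with the $o$ uniform in $N$.

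It remains to show that the resulting total error
\[
\frac{1}{V(N)}\sum_{m\le N}\Delta V(m)\,\epsilon_m\Big(1+\log\frac{V(N)}{V(m)}\Big)^{k},\qquad \epsilon_m\to0,
\]
tends to $0$. I expect this to be the main obstacle, since the weights blow up for small $m$. I would first try to show that the kernel $\frac{1}{V(N)}\Delta V(m)\big(1+\log\frac{V(N)}{V(m)}\big)^k$ has total mass bounded in $N$ and that each fixed column tends to $0$. The total mass is essentially a Riemann–Stieltjes sum for $\int_0^1(1+\log(1/t))^k\,dt<\infty$, via the substitution $t=V(m)/V(N)$. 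This comparison uses $\Delta V/V\to0$ together with the monotonicity of $x\mapsto(1+\log(1/x))^k$. Each column tends to $0$ because $(\log V(N))^k/V(N)\to0$. Silverman–Toeplitz (Theorem \ref{thm:silverman_toeplitz}, applied with $\epsilon_m\to0$) then kills the error. The same Riemann-sum bound also shows that the initial block of finitely many $m$, where the expansion is not yet accurate, contributes $o(1)$. Finally, $\frac{1}{(k-1)!}\cdot\frac1k=\frac1{k!}$ gives \eqref{eq:schatte_iterating} for $k+1$.
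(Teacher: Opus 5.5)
Your argument is correct, but it is organized differently from the paper's, and at two points it is more careful.

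\textbf{Where the routes differ.} You apply the inductive hypothesis to the inner $k$-fold average $\E(k)^V_{m\le n}$ and then take one more $V$-average. The kernel you must evaluate is therefore $\sum_{n=m}^N\frac{\Delta V(n)}{V(n)}\log(V(n)/V(m))^{k-1}$. It starts from $u_m=0$, and you compare it with $u_N^k/k$ by a mean value estimate and telescoping. The paper instead writes $\E(k+2)^V=\E(k+1)^V\circ\E^V$ and applies the hypothesis to the outer average of the new sequence $\E^V_{m\le n}x_m$. It then has to evaluate $\sum_{n=m}^N\frac{\Delta V(n)}{V(n)}\log(V(N)/V(n))^k$, which it does by summation by parts and then solving the resulting self-referential identity for the sum. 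The two routes are mirror images of each other, since composition of the averages is associative.

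\textbf{What your version adds.} First, the paper records the kernel asymptotic multiplicatively, as a factor $(1+o_{m\to\infty}(1))$, and then says the theorem ``follows''. It never explains why these relative errors, summed against the weights $\frac{\Delta V(m)}{V(N)}\log(V(N)/V(m))^{k+1}$, contribute only $o_{N\to\infty}(1)$. Your two ingredients supply exactly that step:
\begin{itemize}
\item the total mass of your kernel is a right-endpoint Riemann sum of the decreasing function $(1+\log(1/t))^k$, hence at most $\int_0^1(1+\log(1/t))^k\,dt$ (this bound needs only monotonicity);
\item each fixed column tends to $0$.
\end{itemize}
Second, the multiplicative form cannot hold uniformly in $N$. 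For $k\ge 1$, $V(n)=e^{\sqrt{n}}$ and $N=m+1$, the ratio of the paper's kernel to $\frac{1}{k+1}\log(V(N)/V(m))^{k+1}$ tends to $k+1$. Your additive error $o_{m\to\infty}(1)\,(1+u_N)^k$, by contrast, is valid for all $N\ge m$.

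\textbf{One small correction.} The version of Silverman--Toeplitz quoted in the paper requires the row sums to tend to $1$. Your kernel does not satisfy this: its row sums tend to $\int_0^1(1+\log(1/t))^k\,dt>1$. What you actually use is the elementary fact that bounded row sums together with vanishing columns send null sequences to null sequences. This is proved directly by splitting off finitely many columns, or you can normalize the kernel by its row sums.
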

\begin{proof}

We proceed by induction on $k$. There is nothing to prove when $k=0$, so suppose that equation (\ref{eq:schatte_iterating}) holds and we will show that
\begin{equation}\label{eq:schatte_iterating_induction}
 \E(k+2)_{n\leq N}^Vx_n = \frac{1}{V(N)\cdot (k+1)!}\sum_{n=1}^N \Delta V(n)\log\left(\frac{V(N)}{V(n)}\right)^{k+1}\cdot x_n+o_{N\to\oo}(1).
\end{equation}
Then
 \begin{align*}
      &\E(k+2)_{n\leq N}^Vx_n = \E(k+1)_{n\leq N}^V(\E_{m\leq n}^Vx_m) \\
      =& \frac{1}{V(N)\cdot k!}\sum_{n=1}^N\Delta V(n)\log\left(\frac{V(N)}{v(n)}\right)^k\E_{m\leq n}^Vx_m+o_{N\to\oo}(1)
      \\
      =&\frac{1}{V(N)\cdot k!}\sum_{n=1}^N\Delta V(n)\log\left(\frac{V(N)}{v(n)}\right)^k\left(\frac{1}{V(n)}\sum_{m=1}^n\Delta V(m)x_m\right)+o_{N\to\oo}(1)
      \\
      =&\frac{1}{V(N)\cdot k!}\sum_{m=1}^N\Delta V(m)x_m\cdot\left(\sum_{n=m}^N\frac{\Delta V(n)}{V(n)}\log\left(\frac{V(N)}{V(n)}\right)^k\right)+o_{N\to\oo}(1).
 \end{align*}   

 We will show that $\sum_{n=m}^N\frac{\Delta V(n)}{V(n)}\log\left(\frac{V(N)}{V(n)}\right)^k = \frac{\log\left(\frac{V(N)}{V(m)}\right)^{k+1}}{k+1}\cdot (1+o_{m\to\oo}(1))$, from which equation (\ref{eq:schatte_iterating_induction}) follows. Recall from Remark \ref{remark:hardy_functions} that $\frac{\Delta V(N)}{V(N)} = \Delta \log(V(N))\cdot (1+o_{N\to\oo}(1))$. Then 
 \begin{equation*}
 \sum_{n=m}^N\frac{\Delta V(n)}{V(n)}\log\left(\frac{V(N)}{V(n)}\right)^k  = (1+{o_{m\to\oo}(1))\cdot }\sum_{n=m}^N\Delta \log(V(n))\log\left(\frac{V(N)}{V(n)}\right)^k.
 \end{equation*}
Let $\ell_n = \log\left(\frac{V(N)}{V(n)}\right)$ and $G_n = \sum_{i=m}^n\Delta \log(V(i))$, so that $\Delta G_n = \Delta \log (V(n)) = -\Delta \ell_n$. Then using summation by parts, we have 
\begin{equation}
\label{eq:schatte_weights_summation_by_parts}
     \sum_{n=m}^N\Delta G_n\ell_n^k = G_n\ell_N-G_{m-1}\ell_{m-1}
     -\sum_{n=m}^{N-1}G_n\left(\ell_{n+1}^k-\ell_{n}^k\right).
 \end{equation}
 The first two terms are equal to $0$ since $\ell_N = 0$ and $G_{m-1}=0$. Next, noting that $\lim_{n\to\oo}\frac{\ell_{n}}{\ell_{n+1}} = 1$, we have 
\begin{align*}
\ell_{n+1}^k-\ell_n^k= (\ell_{n+1}-\ell_{n})\sum_{i=0}^{k-1}\ell_{n+1}^i\ell_{n}^{k-1-i} =& (\ell_{n+1}-\ell_{n})\ell_{n+1}^{k-1} k (1+o_{n\to\oo}(1)) \\=& -\Delta G_{n+1}\cdot \ell_{n+1}^{k-1} k (1+o_{n\to\oo}(1)).
\end{align*}
Additionally, we can write 
\[
G_n = \sum_{i=m}^n\Delta \log(V(i)) = \log(V(n))-\log(V(m-1)) = \ell_{m-1}-\ell_n = (\ell_{m}-\ell_{n+1})\cdot (1+o_{m\to\oo}).
\]
So,
\begin{align*}
   (1+o_{m\to\oo}(1))\cdot&  \sum_{n=m}^{N-1}G_n\left(\ell_{n+1}^k-\ell_n^k\right) = \sum_{n=m}^{N-1}(\ell_m-\ell_{n+1})(-\Delta G_{n+1}\cdot \ell_{n+1}^{k-1} k)\\
=&-k\ell_{m}\sum_{n=m}^{N-1}\Delta G_{n+1}\cdot \ell_{n+1}^{k-1} +k\sum_{n=m}^{N-1}\Delta G_{n+1}\cdot \ell_{n+1}^{k}\\
=&\left(-k\ell_{m}\sum_{n=m}^{N}\Delta G_{n}\cdot \ell_{n}^{k-1} +k\sum_{n=m}^{N}\Delta G_{n}\cdot \ell_{n}^{k}\right)(1+o_{m\to\oo}(1)).
\end{align*}
Comparing with (\ref{eq:schatte_weights_summation_by_parts}), we have
\begin{align*}
      \sum_{n=m}^N\Delta G_n\ell_n^k = -\left(-k\ell_{m}\sum_{n=m}^{N}\Delta G_{n} \ell_{n}^{k-1} +k\sum_{n=m}^{N}\Delta G_{n} \ell_{n}^{k}\right)\cdot (1+o_{m\to\oo}(1))
\end{align*}
so 
\begin{align*}
     (k+1)\sum_{n=m}^N\Delta G_n\ell_n^k =&(1+o_{m\to\oo}(1))\cdot k\ell_{m}\sum_{n=m}^{N}\Delta G_{n} \ell_{n}^{k-1}\\
     =&(1+o_{m\to\oo}(1))\cdot k\ell_{m}\sum_{n=m}^{N}-\Delta \ell_{n} \ell_{n}^{k-1}\\
     =&(1+o_{m\to\oo}(1))\cdot k\ell_{m}\left(\frac{\ell_m^k}{k}-\frac{\ell_N^k}{k}\right)\\
     =&(1+o_{m\to\oo}(1))\cdot \ell_m^{k+1} = (1+o_{m\to\oo}(1))\cdot \log\left(\frac{V(N)}{V(m)}\right)^{k+1}
\end{align*}
as desired.
\end{proof}

\begin{lemma}\label{lem:schatte_iterated_is_zero}
  Let $Y$ be a Banach space and let $(x_n)_{n\in \N}\subset Y$ be a bounded sequence. Let $V:\N\rightarrow (0,\oo)$ be a function which increases to $\oo$ and satisfies $\lim_{N\to\oo}\frac{\Delta V(N)}{V(N)}=0$. Suppose that
    \begin{equation} \label{eq:is_zero_condition}
\limsup_{N\to\oo}\left|\sum_{n=1}^N\frac{\Delta V(n)}{V(n)}x_n\right|<\oo.
    \end{equation}
    Then $\E(\oo)^Vx_n = 0$.
\end{lemma}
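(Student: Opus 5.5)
The plan is to use Theorem \ref{thm:schatte_iterating}, which for each $k$ writes
\[
\E(k+1)_{n\leq N}^Vx_n = \frac{1}{V(N)\, k!}\sum_{n=1}^N \Delta V(n)\log\left(\frac{V(N)}{V(n)}\right)^k x_n+o_{N\to\oo}(1),
\]
and then to perform a summation by parts against the partial sums $S(n)=\sum_{m=1}^n\frac{\Delta V(m)}{V(m)}x_m$. By hypothesis (\ref{eq:is_zero_condition}), and since the $x_n$ are bounded and each individual term tends to $0$, these partial sums satisfy $\sup_n\|S(n)\|=:C<\oo$. Write the weight as
\[
\Delta V(n)\log(V(N)/V(n))^k = \frac{\Delta V(n)}{V(n)}\cdot g_N(n), \qquad g_N(n)=V(n)\log(V(N)/V(n))^k .
\]
The $k$-th iterate then becomes $\frac{1}{V(N)k!}\sum_{n\le N}\Delta S(n)\, g_N(n)$. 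Applying (\ref{eq:summation_by_parts}) gives the boundary term $S(N)g_N(N)=0$, since $\log(V(N)/V(N))=0$, plus the sum $-\sum_{n<N}S(n)\,\Delta g_N(n+1)$. Hence
\[
\|\E(k+1)_{n\le N}^V x_n\|\le \frac{C}{V(N)k!}\sum_{n=1}^{N-1}|g_N(n+1)-g_N(n)| + o(1),
\]
so everything reduces to bounding the total variation of $n\mapsto g_N(n)$ on $[1,N]$.

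To bound this variation, view $g_N$ as the function $h(t)=t\log(V(N)/t)^k$ evaluated at $t=V(n)$. Since $V$ is increasing, the total variation of $g_N$ over $n\le N$ is at most the total variation of $h$ on $[V(1),V(N)]$. Now $h'(t)=\log(V(N)/t)^{k-1}\bigl(\log(V(N)/t)-k\bigr)$, so $h$ increases on $[0,V(N)e^{-k}]$ and decreases afterwards. It vanishes at $t=V(N)$ and is nonnegative. Its total variation on $[V(1),V(N)]$ is therefore at most $2\max h = 2V(N)e^{-k}k^k$. Dividing by $V(N)k!$ gives
\[
\limsup_{N\to\oo}\|\E(k+1)_{n\le N}^Vx_n\|\le \frac{2Ck^ke^{-k}}{k!}\asymp \frac{2C}{\sqrt{2\pi k}}
\]
by Stirling's formula. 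This tends to $0$ as $k\to\oo$, which is exactly $\E(\oo)^Vx_n=0$ with $L=0$. No monotonicity of $\Delta\log V$ is required; only $\Delta V/V\to 0$ is used, and that is the condition under which Theorem \ref{thm:schatte_iterating} applies.

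The main point needing care is the applicability of Theorem \ref{thm:schatte_iterating}. Its $o(1)$ error must be uniform over bounded sequences for fixed $k$, and it depends only on $\sup\|x_n\|$, so the bound holds for each fixed $k$ before $k\to\oo$ is taken. The other delicate step is justifying that the discrete variation is dominated by the continuous one. This holds because $V$ is monotone, so the sampled points $V(1)<\dots<V(N)$ form a partition of $[V(1),V(N)]$, and the variation of $h$ along a partition never exceeds its total variation.
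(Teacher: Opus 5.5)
Your proposal is correct and follows the paper's proof essentially step for step: your $g_N(n)/(V(N)\,k!)$ is the paper's $F(V(n)/V(N))$ with $F(x)=\frac{x}{k!}\log(1/x)^k$. Both arguments sum by parts against the bounded partial sums of $\frac{\Delta V(n)}{V(n)}x_n$, bound the result by $C$ times the total variation of a unimodal function peaking at $e^{-k}$, and then let $k\to\oo$. Your bound $2Ck^ke^{-k}/k!$ also keeps the factor $2$ in the total variation, which the paper's final estimate drops; this is harmless since $k^ke^{-k}/k!\to 0$ in either case.
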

\begin{proof}
Pick $k\in \N$ arbitrarily large and define $F(x) = \frac{x}{k!}\log(\frac{1}{x})^k$. Note that $F$ is increasing on the interval $(0,e^{-k})$ and decreasing on the interval $(e^{-k},1)$. From Theorem \ref{thm:schatte_iterating}, we have that 
\begin{equation}
    \E(k+1)_{n\leq N}^Vx_n = \sum_{n=1}^N\frac{\Delta V(n)}{V(n)}x_n\cdot F\left(\frac{V(n)}{V(N)}\right)+o_{N\to\oo}(1).
\end{equation}

Summation by parts gives us that 
\begin{align*}
&\sum_{n=1}^N\frac{\Delta V(n)}{V(n)}x_n\cdot F\left(\frac{V(n)}{V(N)}\right) \\
=& F\left(\frac{V(N)}{V(N)}\right)\left(\sum_{n=1}^N\frac{\Delta V(n)}{V(n)}x_n\right)-F\left(\frac{V(1)}{V(N)}\right)\left(\sum_{n=1}^1\frac{\Delta V(n)}{V(n)}x_n\right)\\-&\sum_{n=1}^N\left(\sum_{k=1}^{n+1}\frac{\Delta V(k)}{V(k)}\right)\cdot\left(F\left(\frac{V(k)}{V(N)}\right)-F\left(\frac{V(k-1)}{V(N)}\right)\right).
\end{align*}

Pick $C>0$ such that $\limsup_{N\to\oo}\left|\sum_{n=1}^N\frac{\Delta V(n)}{V(n)}x_n\right|\leq C$. Then
\begin{align*}
&\left|\sum_{n=1}^N\frac{\Delta V(n)}{V(n)}x_n\cdot F\left(\frac{V(n)}{V(N)}\right)\right|\\
\leq&\,
C\left(F(1)+F\left(\frac{V(1)}{V(N)}\right)+\sum_{n=1}^NF\left(\frac{V(n)}{V(N)}\right)-F\left(\frac{V(n-1)}{V(N)}\right)\right)\\
\leq &\,C\left(F(1)+F\left(\frac{V(1)}{V(N)}\right)+F(e^{-k})-F(1)-F\left(\frac{V(1)}{V(N)}\right)\right) \\=& \,C\cdot F(e^{-k}).
\end{align*}
The result follows from the fact that $\lim_{k\to\oo}F(e^{-k})=0$.
\end{proof}


\begin{theorem}\label{thn:uniform_implies_iterated}
    Let $Y$ be a Banach space and let $(x_n)_{n\in \N}\subset Y$ be a bounded sequence. Let $V:\N\rightarrow (0,\oo)$ be a function which increases to $\oo$ and satisfies $\lim_{N\to\oo}\frac{\Delta V(N)}{V(N)}=0$. Suppose that $\E_{\text{unif}}^{\log V}(x_n) = L$. Then $\E(\oo)^Vx_n = L$.
\end{theorem}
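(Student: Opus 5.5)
By subtracting $L$ from every term we may assume $L=0$; the iterated averages $\E(k)^V$ are linear and preserve constants, so this is harmless. The plan is to reduce to Lemma \ref{lem:schatte_iterated_is_zero} by splitting $(x_n)$ into two pieces. The first piece has bounded weighted partial sums $\sum \frac{\Delta V(n)}{V(n)}y_n$. The second piece is small in norm on average, up to a block boundary error that is controlled as well.

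Write $W(N)=\log V(N)$, so that $\frac{\Delta V(n)}{V(n)}=\Delta W(n)(1+o(1))$ by the expansion of Remark \ref{remark:hardy_functions}. Fix $\epsilon>0$. By the hypothesis $\E_{\unif}^{\log V}x_n=0$, choose $H$ such that $\|\E^W_{n\in[M,N]}x_n\|<\epsilon$ whenever $W(N)-W(M)\ge H$.

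Partition $\N$ into consecutive blocks $I_j=[M_j,M_{j+1})$ with $W(M_{j+1})-W(M_j)\in[H,H+1]$. This is possible for large $j$ because $\Delta W\to0$. On each block $I_j$, define $z_n=\E^W_{m\in I_j}x_m$ and set $y_n=x_n-z_n$. Then $\|z_n\|\le \epsilon(1+o(1))$. Also, the sum of $\Delta W(n)y_n$ over a complete block is zero. A partial block has absolute sum at most $2\sup\|x\|(H+1)$. Replacing $\Delta W$ by $\Delta V/V$ changes these sums by $o(1)\cdot\sum_{I_j}\Delta W\,\|y_n\|$.

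That replacement is the step I expect to be the main obstacle, since the accumulated error over infinitely many blocks need not stay bounded. To get around it, I would define the blocks and the averages $z_n$ directly using the weights $\Delta V(n)/V(n)$ instead of $\Delta W(n)$. The two kinds of weighted averages over a block differ by $o(1)$ uniformly, because the ratio of the weights tends to $1$ and the block has weighted length about $H$. So $\|z_n\|\le\epsilon+o(1)$ still holds, and now
\[
\sum_{n\le N}\frac{\Delta V(n)}{V(n)}y_n
\]
is exactly zero at block endpoints and is bounded by $2(H+1)\sup\|x\|$ in general.

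Lemma \ref{lem:schatte_iterated_is_zero} then gives $\E(\oo)^Vy_n=0$. For the other piece, each operator $\E(k)^V$ is an average with nonnegative weights summing to $1$, so
\[
\limsup_N\|\E(k)^V_{n\le N}z_n\|\le\limsup_n\|z_n\|\le\epsilon
\]
for every $k$. By linearity,
\[
\lim_{k}\limsup_N\|\E(k)^V_{n\le N}x_n\|\le\epsilon.
\]
Since $\epsilon$ was arbitrary, this gives $\E(\oo)^Vx_n=0$.

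One detail to confirm is that the limit over $k$ exists, or else to interpret the definition with a $\limsup$ in $k$. This follows from monotonicity: applying another averaging cannot increase the $\limsup$ norm, for the same nonnegative-weights reason, so $\limsup_N\|\E(k)^V_{n\le N}x_n\|$ is nonincreasing in $k$.
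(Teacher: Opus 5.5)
Your proposal is correct and follows essentially the same route as the paper. Both split $x_n$ into two pieces. The first is the $\frac{\Delta V(n)}{V(n)}$-weighted block average over consecutive blocks of $\log V$-length roughly a fixed $H$, which is uniformly small. The second is the remainder, whose $\frac{\Delta V(n)}{V(n)}$-weighted partial sums vanish at block endpoints and so are bounded; Lemma \ref{lem:schatte_iterated_is_zero} handles it, while a nonnegative-weight averaging bound handles the small part. Your $y$ and $z$ play the roles of the paper's $z$ and $y$, and you spell out the $\Delta \log V$ versus $\frac{\Delta V}{V}$ comparison that the paper handles in a single remark.
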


\begin{proof}
Without loss of generality, assume $L=0$. Pick any $\epsilon>0$. We will find bounded sequences $(y_n)_{n\in \N}, (z_n)_{n\in \N}$ with $x_n=y_n+z_n$ for each $n\in \N$, such that $\sup_{N\in \N}|y_N|<\epsilon$ and $\limsup_{N\to\oo}\left|\sum_{n=1}^N\frac{\Delta V(n)}{V(n)}z_n\right|<\oo$. From Lemma \ref{lem:schatte_iterated_is_zero} it will follow that 
$$
|\E(\oo)^Vx_n|\leq|\E(\oo)^Vy_n|+|\E(\oo)^Vz_n|< \epsilon+0
$$
which will conclude the proof.

Recall from Remark \ref{remark:hardy_functions} that $\frac{\Delta V(n)}{V(n)} = \Delta \log(V(n))\cdot (1+o_{n\to\oo}(1))$.
We know that 
\[
\frac{1}{\log(V(B))-\log(V(A))}\sum_{n=B}^A\Delta \log(V(n))x_n
\]
tends to $0$ whenever $\log(V(B))-\log(V(A))$ tends to $\oo$, and so it follows that we also have that 
\[
\frac{1}{\sum_{n=B}^A\frac{\Delta V(n)}{V(n)}}\sum_{n=B}^A\frac{\Delta V(n)}{V(n)}x_n
\]
tends to $L$ whenever $\log(V(B))-\log(V(A))$ tends to $\oo$.

We can find a $K_0>0$ such that for any $M,N$ with $W(N)-W(M)>K_0$,
\begin{equation}\label{eq:epsilonics_1}
     \left|\frac{1}{\sum_{n=M}^{N}\frac{\Delta V(n)}{V(n)}}\cdot  \sum_{n=M}^N\frac{\Delta V(n)}{V(n)}x_n\right|<\epsilon.
\end{equation}
Define a sequence $(N_i)$ as follows. Let $N_1=1$ and having picked $N_i$ for $i\in \N$, pick the smallest $N_{i+1}>N_i$ such that $\log(V(N_{i+1}-1))-\log(V(N_i))>K_0$. Define 
\begin{equation}\label{eq:y_def}
y_k = \frac{1}{\sum_{n=N_{i}}^{N_{i+1}-1}\frac{\Delta V(n)}{V(n)}} \cdot \sum_{n=N_{i}}^{N_{i+1}-1}\frac{\Delta V(n)}{V(n)}x_n \text{ for all }k\in [N_i,N_{i+1}-1].
\end{equation}
Then $|y_n|<\epsilon$ for all $n\in \N$. Put $z_n = x_n-y_n$ for all $n\in \N$, and note that $(z_n)$ is bounded since $(x_n)_{n\in \N}$ and $(y_n)_{n\in \N}$ are bounded. All that is left is to show that $\limsup_{N\to\oo}\left|\sum_{n=1}^N\frac{\Delta V(n)}{V(n)}z_n\right|<\oo$.

Pick $N\in \N$ and pick $i$ such that $N_{i-1}<N\leq N_{i}$. Then

\begin{align*}
 \sum_{n=1}^N\frac{\Delta V(n)}{V(n)}z_n = \sum_{m=1}^{i}\sum_{k=N_{m-1}}^{N_m-1}\frac{\Delta V(k)}{V(k)}z_k-\sum_{k=N+1}^{N_i-1}\frac{\Delta V(k)}{V(k)}z_k.
\end{align*}
$z_k = x_k-y_k$, and $y_k$ is constant for $k\in [N_{m-1},N_m-1]$, so  
\begin{align*}
    \sum_{m=1}^{i}\sum_{k=N_{m-1}}^{N_m-1}\frac{\Delta V(k)}{V(k)}z_k =& \sum_{m=1}^{i}\sum_{k=N_{m-1}}^{N_m-1}\frac{\Delta V(k)}{V(k)}x_k-\sum_{m=1}^{i}\sum_{k=N_{m-1}}^{N_m-1}\frac{\Delta V(k)}{V(k)}y_k\\
    =&\sum_{m=1}^{i}y_{N_{m-1}}\cdot\sum_{k=N_{m-1}}^{N_m-1}\frac{\Delta V(k)}{V(k)}-\sum_{m=1}^{i}y_{N_{m-1}}\cdot\sum_{k=N_{m-1}}^{N_m-1}\frac{\Delta V(k)}{V(k)} = 0.
\end{align*}
Then 
\begin{align*}
     \left|\sum_{n=1}^N\frac{\Delta V(n)}{V(n)}z_n \right|= \left|\sum_{k=N+1}^{N_i-1}
     \frac{\Delta V(k)}{V(k)}z_k\right|\leq \sup_{n\in \N}\|z_n\| \cdot \sum_{k=N+1}^{N_i-1}\frac{\Delta V(k)}{V(k)}.
\end{align*}
 $\sum_{k=N+1}^{N_i}\frac{\Delta V(k)}{V(k)}\leq \sum_{k=N_{i-1}}^{N_i}\frac{\Delta V(k)}{V(k)}$ which is close to $\log(V(N_{i}))-\log(V(N_{i-1}+1))$. By definition of $(N_i)_{i\in \N}$, we have $\log(V(N_{i}))-\log(V(N_{i-1}+1))\leq K_0+1$ for all large enough $i$. So $\limsup_{N\to\oo}\left|\sum_{n=1}^N\frac{\Delta V(n)}{V(n)}z_n\right|<\oo$ as desired.

\end{proof}


\bibliographystyle{aomalpha}
\bibliography{Paper_references}



\bigskip
\footnotesize
\noindent
Michael Reilly\\
\textsc{The Ohio State University}\\
\href{mailto:reilly.201@osu.edu}
{\texttt{reilly.201@osu.edu}}

\end{document}